\definecolor{gris}{gray}{0.45}
\newtheorem{theorem}{Theorem}[section]
\newtheorem{lemma}[theorem]{Lemma}
\newtheorem{proposition}[theorem]{Proposition}
\newtheorem{corollary}[theorem]{Corollary}
\newtheorem{conjecture}[theorem]{Conjecture}
\newtheorem{predefinition}[theorem]{Definition}
\newenvironment{definition}{\begin{predefinition}\rm}{\end{predefinition}}
\newtheorem{preremark}[theorem]{Remark}
\newenvironment{remark}{\begin{preremark}\rm}{\end{preremark}}
\newtheorem{prenotation}[theorem]{Notation}
\newtheorem{preexample}[theorem]{Example}
\newenvironment{example}{\begin{preexample}\rm}{\end{preexample}}
\newtheorem{preclaim}[theorem]{Claim}
\newtheorem{prequestion}[theorem]{Question}
\DeclareMathOperator{\Char}{Char}
\DeclareMathOperator{\Pic}{Pic}
\DeclareMathOperator{\Jac}{Jac}
\DeclareMathOperator{\Div}{Div}
\DeclareMathOperator{\Mult}{mult}
\newcommand{\FF}{\mathbb{F}}
\newcommand{\CC}{\mathbb{C}}
\newcommand{\HH}{\mathbb{H}}
\newcommand{\PP}{\mathbb{P}}
\newcommand{\ZZ}{\mathbb{Z}}
\newcommand{\Mod}[1]{\ (\mathrm{mod}\ #1)}
\newcommand{\calC}{\mathcal{C}}
\newcommand{\calL}{\mathcal{L}}
\newcommand{\calQ}{\mathcal{Q}}
\newcommand{\calS}{\mathcal{S}}
\def\Z{\mathbb{Z}}
\def\C{\mathbb{C}}
\def\QV{\textrm{QV}}
\def\QVO{\textrm{QV$_{-}$}}
\def\QVE{\textrm{QV$_{+}$}}
\def\eps{\epsilon}
\newcommand{\Ch}[2]{\begin{bmatrix} #1 \\ #2 \end{bmatrix}}
\def\ex{\boldsymbol{e}}
\newcommand{\car}[2]{%
\left[%
\begin{smallmatrix}%
\displaystyle#1\\%
\displaystyle#2\end{smallmatrix}\right]}
\title{A ThomAe-like Formula: \\Algebraic Computations of Theta Constants}
\author{Turku Ozlum Celik}
\begin{document}


\address{Max Planck Institute for Mathematics in the Sciences, InselStrasse 22, 04103, Leipzig, germany}
\email{tuerkue.celik@mis.mpg.de}


\maketitle
\begin{abstract}

We give an algebraic method to compute the fourth power of the quotient of any even theta constants associated to a given non-hyperelliptic curve in terms of geometry of the curve.
In order to apply the method, we work out non-hyperelliptic curves of genus 4, in particular, such curves lying on a singular quadric, which arise from del Pezzo surfaces of degree 1. Indeed, we obtain a complete 2-level structure of the curves by studying their theta characteristic divisors via exceptional divisors of the del Pezzo surfaces as the structure is required for the method.

\end{abstract}

\section{Introduction}

Computations of theta constants are closely related to a classical problem that asks which complex principally polarized abelian varieties arise as Jacobian varieties of curves. The problem is called the {\it Schottky problem} and goes back to Riemann \cite{Rie1857, Rie1866, FarGruMan2017}. 
In addition, the topic has many applications in different areas such as theoretical physics 
\cite{BobKle} 
via integrable systems, and cryptography \cite{Wen2003} via AGM-style point counting algorithms \cite{Rit2004} and isogeny based cryptography \cite{LubRob2016}. 

Let $g\geq 0$ be an integer. Denote $\mathcal{M}_g $ the moduli space over $\CC$ of curves of genus $g$ and $\mathcal{A}_g$ the moduli space of complex principally polarized abelian varieties of dimension $g$. The {\it Torelli map} 
$j:\mathcal{M}_g \rightarrow \mathcal{A}_g$
maps the isomorphism class of a curve to the isomorphism class of its Jacobian with its canonical polarization. The Schottky problem is to characterize the locus of Jacobians $\mathcal{J}_g$ which is defined to be the closure of $j(\mathcal{M}_g)$ in $\mathcal{A}_g$. Mumford showed that a principally polarized abelian variety can be written as an intersection of explicit quadrics in a projective space \cite{Mum1966}.  The coefficients of these quadrics are determined by theta constants denoted by $\vartheta[q](\tau)$, where $\tau $ is a \emph{Riemann matrix} for a specific choice of bases of regular
differentials and homology and $[q]\in \FF_2^g\oplus \FF_2^g$ is a \emph{characteristic}. Thomae-like formulas express these theta constants in terms of geometry of the curve. So the formulas can be seen as an explicit description of the Torelli map. In the case of
a hyperelliptic curve given by $y^2=\prod_{i=1}^{2g+2} (x-\alpha_i)$, we have 
$$\vartheta[q](\tau)^4 = (2 i \pi)^{-2g} \cdot \det(\Omega_1)^2 \cdot \prod_{i,j \in U} (\alpha_i-\alpha_j),$$
where $\Omega_1$ is the first half of a period matrix and $U$ is a set of indices depending on the
characteristic $[q]$ \cite[Page 218]{Tho1870}.

Let $\mathcal{C}$ be a non-hyperelliptic curve of genus $g$ over a field $k\subseteq \mathbb{C}$ and $\tau$ be a fixed period matrix. When $\calC$ is of genus 3, for any two even theta characteristics $p_1,p_2$ we have
\begin{equation}\label{WeberEng}
\left(\frac{\vartheta[p_1](\tau)}{\vartheta[p_2](\tau)}\right)^4=
(-1)^n\cdot 
\frac{\left[\beta_1,\beta_2,\beta_3\right] \cdot
\left[\beta_1,\beta_{12},\beta_{13}\right]
 \cdot \left[\beta_{12},\beta_{2},\beta_{23}\right] \cdot
\left[\beta_{13},\beta_{23},\beta_{3}\right]}{\left[
\beta_{23},\beta_{13},\beta_{12}\right] \cdot
\left[\beta_{23},\beta_{3},\beta_{2}\right] \cdot
\left[\beta_{3},\beta_{13},\beta_{1}\right] \cdot
\left[\beta_{2},\beta_{1},\beta_{12}\right]}, 
\end{equation}
where $[\beta_i, \beta_{j},\beta_k]$ is the determinant of the
coefficients of the equations $\beta_i,\beta_j$ and $\beta_k$ which are certain lines labeled via some combinatorial data with respect to $\calC$ \cite[page 162]{Web1876} and $n=0,1$ can be computed depending on $p_1,p_2$. We call this Thomae-like formula {\it Weber's formula}. In this article, we mainly present a generalization of Weber's formula for any genus by getting motivated from \cite[Remark 1]{NarRit2017}. 

\begin{theorem}\label{mainTheorem} Let $A_i$'s and $B_i$'s be fixed representatives for the contact points of $\calC$ with specific hyperplanes for $i=1,\dots , g-1$. For any two even characteristics $p_1,p_2$, we have the quotients of explicitly computable homogeneous quadratic forms $\tilde{Q}^r_{i}$ and $\tilde{Q}^s_{i}$ in $g$ variables such that 

\begin{tikzpicture}
\node[anchor=center, scale=0.035cm] (eqn) {$(-1)^n \cdot  \frac{\vartheta[p_1](\tau)^4}{\vartheta[p_2](\tau)^4}
=\frac
{d_{1}
{\begin{vmatrix} 
\tilde{Q}^s_{1}(B_1)  & \cdots & \tilde{Q}^s_{g-1}(B_1) \\
\vdots & & \vdots \\
\tilde{Q}^s_{1}(B_{g-1})   & \cdots & \tilde{Q}^s_{g-1}(B_{g-1})
\end{vmatrix}}^2
{\begin{vmatrix} \tilde{Q}^r_{1}(A_1)  & \cdots & \tilde{Q}^s_{g-1}(A_1) \\ 
\vdots & & \vdots  \\ 
\tilde{Q}^r_{1}(A_{g-1})  & \cdots & \tilde{Q}^r_{g-1}(A_{g-1})
\end{vmatrix}}^2}
{d_{2}{\begin{vmatrix} \tilde{Q}^r_{1}(B_1)  & \cdots & \tilde{Q}^r_{g-1}(B_1) \\ 
\vdots & & \vdots  \\ 
\tilde{Q}^r_{1}(B_{g-1})  & \cdots & \tilde{Q}^r_{g-1}(B_{g-1})
\end{vmatrix}}^2
{\begin{vmatrix} 
\tilde{Q}^s_{1}(A_1)  & \cdots & \tilde{Q}^s_{g-1}(A_1) \\
\vdots & & \vdots \\
\tilde{Q}^s_{1}(A_{g-1})   & \cdots & \tilde{Q}^s_{g-1}(A_{g-1})
\end{vmatrix}}^2},$};
\end{tikzpicture}
where $d_1,d_2$ are the values of products of linear forms defining certain hyperplanes at the points $A_i$ and $B_i$'s and $n=0,1$ is given purely in terms of $p_1,p_2$. 

\end{theorem}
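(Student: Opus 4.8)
The plan is to derive the formula from the classical product expansions of theta functions, following the strategy that makes Weber's genus-$3$ formula work. First I would recall the \emph{Riemann--Jacobi derivative formula} (or, more precisely, the generalization due to Frobenius and Igusa expressing products of theta constants in terms of Wronskian-type determinants of theta functions with odd characteristics). Concretely, for an appropriate collection of odd characteristics summing to an even characteristic $p$, one has an identity of the shape
\[
\vartheta[p](\tau)^{?}\ =\ c\cdot \det\!\bigl(\partial_j \vartheta[q_i](0,\tau)\bigr)_{i,j},
\]
and the gradients $\partial\vartheta[q_i](0,\tau)$ are, up to a common factor involving $\det\Omega_1$, the coordinates of the contact points of the curve with the hyperplane cut out by the odd theta characteristic $q_i$ under the canonical embedding. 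This is the step that converts analytic data into the projective geometry of $\calC$, and it is where the ``specific hyperplanes'' and the representatives $A_i, B_i$ of their contact points enter.

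The second step is to express the contact hyperplanes themselves. A hyperplane section in the class of an odd theta characteristic $q$ cuts out the divisor $2D_q$ where $\calO(D_q)$ is the theta characteristic line bundle; by the geometric Riemann--Roch / Riemann singularity theorem, the support of $D_q$ consists of $g-1$ points $A_1,\dots,A_{g-1}$ (counted with multiplicity) on $\calC$. I would then write an even theta characteristic $p$ as a sum $p = q^r + q^s$ (mod $2$) of pairs of odd ones, or more generally in the combinatorial bookkeeping needed so that the Wronskian identity applies; each summand contributes a hyperplane whose defining linear form, evaluated at the contact points of the \emph{other} hyperplanes, produces the determinant entries. Replacing the transcendental gradients by quadratic forms $\tilde Q^r_i, \tilde Q^s_i$ is the standard trick of squaring: $\partial_j\vartheta[q](0,\tau)\,\partial_k\vartheta[q'](0,\tau)$ is (again up to the ubiquitous $\det\Omega_1$ factor) a value of a quadric that is \emph{defined over $k$} by the geometry of $\calC$, so taking fourth powers of the theta quotient and regrouping yields products and ratios of such quadric values — this is exactly why the statement is about $\vartheta[p_1]^4/\vartheta[p_2]^4$ and why everything becomes algebraic. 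The factors $d_1, d_2$ collect the leftover products of the linear forms at the $A_i$ and $B_i$, and the sign $(-1)^n$ is the discrepancy between the two applications of the Wronskian identity, governed by the Arf invariants / the combinatorial relation between $p_1$ and $p_2$ — its value can be read off from the classical sign computations (e.g.\ via the transformation formula for theta constants under the symplectic group, exactly as $n$ is computed in Weber's formula \eqref{WeberEng}).

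Carrying this out, the genus-$3$ case \eqref{WeberEng} should drop out as a consistency check: there the quadrics degenerate (via the bitangent structure and the fact that on a plane quartic the relevant Wronskians factor through lines), recovering the brackets $[\beta_i,\beta_j,\beta_k]$ of three linear forms. The main obstacle I anticipate is \textbf{bookkeeping the characteristics and the sign $n$ uniformly in $g$}: one must fix, once and for all, a symplectic basis, a labelling of the $2g-2$ (or however many are needed) odd characteristics whose associated hyperplanes give the $A_i$'s and $B_i$'s, and then verify that for \emph{any} pair $p_1, p_2$ of even characteristics the required decomposition into these building blocks exists and that the resulting proportionality constants from the two Wronskian identities combine into a single well-defined $(-1)^n$. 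A secondary technical point is ensuring the determinants are nonzero — i.e.\ that the chosen contact points are in general enough position that the quadrics $\tilde Q^r_i$ (resp.\ $\tilde Q^s_i$) are linearly independent when restricted to them — which should follow from the non-hyperellipticity of $\calC$ and the base-point-freeness of the canonical system, but needs to be stated carefully.
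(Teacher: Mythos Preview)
Your strategy relies on a clean Wronskian identity of the form $\vartheta[p](\tau)^{?} = c\cdot\det(\partial_j\vartheta[q_i](0,\tau))$, but no such identity exists in genus $g>1$. Igusa's generalized Jacobi formula expresses such a Jacobian determinant as a \emph{sum} over many even characteristics (or, in Fay's versions, involves nontrivial configurations of characteristics and extra theta-constant factors that do not cancel in a simple ratio). So the very first step---isolating a single $\vartheta[p_1]^4/\vartheta[p_2]^4$ from a gradient identity---has no obvious realization, and the rest of the sketch (regrouping products $\partial_j\vartheta[q]\,\partial_k\vartheta[q']$ into algebraic quadrics) never gets off the ground. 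Related to this, your identification of the $\tilde Q^r_i,\tilde Q^s_i$ is not what the paper produces: they are not bilinear in theta-gradients but are honest quadric hypersurfaces in $\PP^{g-1}$ cutting out a syzygetic tetrad of theta-characteristic divisors on $\calC$.

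The paper's route is genuinely different and avoids Wronskian identities altogether. One fixes a pair of odd characteristics $q_1,\overline q_1$ with $q_1+\overline q_1=p_1+p_2$, takes their contact divisors $\sum A_i$ and $\sum B_i$, and for a generic degree-$(2g-3)$ divisor $T$ studies the function $f_{i,T}(P)=\vartheta[p_i](u(P)+u(T)-u(\kappa))$ on $\calC$. Riemann's theorem pins down its zero divisor in $|\kappa+D_{p_i}-T|$, so $f_{1,T}/f_{2,T}$ equals a constant $\lambda_T$ times a ratio $\chi_{1,T}/\chi_{2,T}$ of $(2g-2)\times(2g-2)$ determinants built from a basis of $\calL(\kappa+D_{p_i})$ (the Wurzelfunktionen $\sqrt{q_1 r_j\overline r_j}$, $\sqrt{\overline q_1 s_j\overline s_j}$, with the pairs $r_j,\overline r_j$ and $s_j,\overline s_j$ drawn from two specific Steiner sets). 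The key lemma, generalizing \cite{NarRit2017}, is that $\lambda_T$ is independent of $T$; specializing $T$ to $A_2+\cdots+A_{g-1}+\sum A_i$ gives $\vartheta[p_1]^2/\vartheta[p_2]^2$ directly, while $T'=A_2+\cdots+A_{g-1}+\sum B_i$ gives its reciprocal up to the sign $(-1)^{a(q_0+p_1+p_2)}$, so the fourth-power quotient equals $(\chi_{2,T'}/\chi_{1,T'})^2$ at $A_1$. After the Wurzelfunktionen are chosen, the $(2g-2)$-determinants collapse to products of $(g-1)$-determinants in $\sqrt{r_i\overline r_i}$ and $\sqrt{s_i\overline s_i}$; dividing through by an extra pair and replacing each $\sqrt{r_i\overline r_i}/\sqrt{r_g\overline r_g}$ by the restriction of the quadric $\calQ^r_i/\calQ^r_g$ cutting the syzygetic tetrad $D_{r_i}+D_{\overline r_i}+D_{r_{g+1}}+D_{\overline r_{g+1}}$ gives the stated algebraic formula. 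The mechanism that makes a single theta-constant ratio appear is thus Riemann's vanishing theorem on the curve, not a derivative identity on $\HH_g$.
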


 We define the {\it Jacobian} of $\mathcal{C}$ as the quotient $\mathbb{C}^g/(\mathbb{Z}^g + \tau \mathbb{Z}^g)$ with respect to a {\it normalized period matrix} $\tau $ in the Siegel upper half space $\mathbb{H}_g$. Denote it $\textrm{Jac}(\mathcal{C})$. 
A {\it complete $2$-level structure of $\calC$} is represented via the defining equation(s) of the image of $\calC$ under the canonical embedding and certain divisors on the curve with a suitable labeling as follows. Such divisors are called {\it theta characteristic divisors}. There are two kinds of theta characteristic divisors, {\it even} and {\it odd}, which are determined by the parities of the dimensions of the associated Riemann-Roch spaces. The odd theta characteristic divisors correspond to some geometric objects called {\it multitangents} on the canonical model of $\mathcal{C}$. For instance, these objects are called {\it bitangents} when $g=3$ and {\it tritangents} when $g=4$.  On the other hand, there is a canonical correspondence between the theta characteristic divisors on $\mathcal{C}$ and the quadratic forms on $\textrm{Jac}(\mathcal{C})$[2] over $\mathbb{F}_2$, where $\textrm{Jac}(\mathcal{C})[2]$ denotes the 2-torsion subgroup of $\textrm{Jac}(\mathcal{C})$. The quadratic forms are {\it labeled} through some combinatorial data which is called an {\it Aronhold basis}. For instance, the labeling which appears in Weber's formula is due to an Aronhold basis. The complete 2-level structure plays a role for the generalization of Weber's formula.

The paper is organized as follows. Section~\ref{background} mainly recalls some mathematical background as a base of the sequel, such as quadratic forms over $\FF_2$, theta functions and characteristics, theta characteristic divisors, multitangents and their relations. We discuss the part reviewing quadratic forms in a coordinate-free setting peculiarly. We prove a coordinate-free version of \cite[Theorem A1.1]{RauFar1974} which is about obtaining an Aronhold basis. At the same time, we give another way of the labeling in terms of coordinates by using \emph{Steiner sets}. Section~\ref{background} might be considered also as a compact collection about the geometric, algebraic, combinatorial structures of non-hyperelliptic curves of genus g and the link among them. 

In Section~\ref{ALG}, we prove Theorem~\ref{mainTheorem}. As a consequence of the theorem, we give Algorithm~\ref{AlgTheCon} to compute the fourth power of the quotient of even theta constants associated to any given non-hyperelliptic curve.

In Section~\ref{DelPezzoSubsections}, in order to apply the algorithm, we study particularly curves of genus 4 since we have Weber's formula for the case of genus 3. Especially the ones which arise from del Pezzo surfaces of degree 1 are worked out because their geometric, algebraic and combinatorial properties are more accessible. We present a way to obtain a complete 2-level structure of these curves by finding an Aronhold basis thanks to geometry of the surface via some results in Section~\ref{background}. We exhibit Example~\ref{Ex} to apply the way and finally use the example for an explicit computation with our algorithm. 

\subsection*{Acknowledgments}
This article is based on a part of the doctoral dissertation of the author which is studied under the supervision of Christophe Ritzenthaler. She would like to thank him not only for the suggestion of the project but also for his support and valuable guidance.  The author is also thankful to Alessio Fiorentino, Avinash Kulkarni, Yue Ren and Mahsa Sayyary Namin for fruitful discussions about the topic and to Bernd Sturmfels for his beneficial remarks on the first version. 

\section{Theta Characteristics}\label{background}

In this section, we review some basic definitions and results about quadratic forms over $\FF_2$, theta functions and characteristics, theta characteristic divisors, multitangents and their relations devoted to the proof of Theorem~\ref{mainTheorem}. We refer to the classical source \cite{Dol2012} for this part. 

\subsection{Quadratic forms over $\FF_2$}

Let $g \geq 1$ be an integer and $V$ be a vector space of dimension
$2g$ over $\FF_2$. We fix a bilinear, non-degenerate, alternating form $\langle ,\rangle$ on $V$. Since $\Char \FF_2 =2$, there exists a basis $\{e_1,\dots , e_g, f_1 , \dots , f_g\}$ such that the matrix $\mathcal{M}_g$ associated to the bilinear form $\langle ,\rangle$ is 
\begin{align*}
\mathcal{M}_g =
 \begin{pmatrix} 0_g & I_g \\
I_g & 0_g 
\end{pmatrix},
\end{align*}
where $0_g, I_g$ are the zero, identity $g\times g$ matrices respectively. In other words, 
$$
\langle e_i,e_j\rangle=\langle f_i,f_j\rangle=0 \text{ and } \langle e_i,f_j\rangle=\delta_{ij} 
$$
for all $1\leq i,j\leq g$. 
Such a basis is called a {\it symplectic basis}.

We say that $q : V \to \FF_2$ is a {\it quadratic form}
on $V$ if 
$q(u+v)=q(u)+q(v)+\langle u,v\rangle$
for all $u,v \in V.$ Let $\QV$ denote the set of all quadratic forms on $(V,\langle , \rangle)$. 

The vector space $V$ has an action on $\QV$. Indeed, we define the quadratic form $q+v$ by 
$$(q+v)(u)=q(u)+\langle v,u \rangle$$ for any $q\in QV$ and $v\in V$. Since the form $\langle , \rangle$ is nondegenerate, the action is free. The equality $\# V = \# \QV$ implies that the action is also transitive. So, for any two quadratic forms $q,q'\in\QV$, there is a unique vector $v=q+q'$ such that 
$$\langle v,u \rangle=q(u)+q'(u).$$ In other words, the space $\QV$ is a homogeneous space for $V$. This implies that the disjoint union $$V\bigsqcup \QV$$ is an $\FF_2$-vector space of dimension $2g+1$.

We now define an invariant on quadratic forms which plays an important role in the classification of quadratic forms over $\FF_2$. 

\begin{definition}
Let $\{e_1,\ldots,e_g,f_1,\ldots,f_g\}$ be a symplectic basis of $(V,\langle,\rangle)$.
We define
the \emph{Arf invariant} $a(q)$ of a quadratic form $q$ by
$$a(q) = \sum_{i=1}^g q(e_i)q(f_i).$$
A quadratic form $q$ is called {\it odd} (resp. {\it even}) if $a(q)=1$ (resp. $a(q)=0$). Let $\QVO$ (resp. $\QVE$) denote the set of all odd (resp. even) quadratic forms. 
\end{definition}

The Arf invariant does not depend on the choice of symplectic basis. The invariant splits the quadratic forms into two classes $\QVO$ and $\QVE$ which have the cardinalities $2^{g-1}(2^g-1)$ and $2^{g-1}(2^g+1)$ respectively.

\subsubsection{Quadratic forms in terms of coordinates}\label{sec:quadcoor}

We may introduce the quadratic forms also in terms of coordinates by fixing a symplectic basis. 

Fix a symplectic basis $\{e_1,\dots , e_g;f_1,\dots ,f_g\}$. We write the linear expression of any vector $w\in V$ as follows
\begin{equation*}
w=\lambda_1e_1+\dots +\lambda_ge_g+\mu_1f_1+\dots +\mu_gf_g.
\end{equation*}
For the simplicity, we write $w=(\lambda,\mu)$, where $\lambda=(\lambda_1,\dots , \lambda_g)$ and $\mu=(\mu_1,\dots , \mu_g)$ in $\FF_2^{g}$. We define the simplest quadratic form $q_0$ as
\begin{equation}\label{q0}
q_0(w)=\lambda\cdot \mu,
\end{equation}  
where $\cdot$ denotes the usual scalar product of $g$-tuples. If we take any vector $v\in V$ with the coordinates $(\epsilon,\epsilon')=(\epsilon_1,\dots,\epsilon_g,\epsilon_1',\dots , \epsilon_g')$ then the quadratic form $q:=q_0+v$ acts on $V$ by
\begin{equation*}
q(w)=\epsilon\cdot \mu +\epsilon'\cdot \lambda +\lambda \cdot \mu.
\end{equation*}
Let us write $q=\Ch{\eps}{\eps'}$. We see that 
$$\eps=(q(e_1),\dots,q(e_g)), \quad \eps'=(q(f_1),\dots,q(f_g)),$$
and so the Arf invariant of the quadratic form $q$ in coordinates is given as
$$a(q)=\eps \cdot \eps'.$$
In terms of coordinates, we have 
\begin{equation*}
\begin{array}{l}
\Ch{\eps}{\eps'} + (\lambda,\mu)=\Ch{\eps+\mu}{\eps'+\lambda},\\
\Ch{\eps_1}{\eps'_1}+\Ch{\eps_2}{\eps'_2}+
\Ch{\eps_3}{\eps'_3}=\Ch{\eps_1+\eps_2+\eps_3}
{\eps'_1+\eps'_2+\eps'_3}.
\end{array}
\end{equation*}
 This implies that 
\begin{align}
\begin{split}\label{arfproperty1}
&a(q+v)=a(q)+q(v),\\ 
&a(q_1+q_2+q_3)=a(q_1)+a(q_2)+a(q_3)+\langle v_1,v_2 \rangle, 
\end{split}
\end{align}
where $v_1=q_1+q_2, v_2=q_1+q_3$ for any $q,q_1,q_2,q_3\in \QV$ and $v\in V$.

\subsubsection{Aronhold Basis}

Let $S=\{q_1,\dots , q_{2g+1}\}$ be a set of linearly independent vectors of the vector space $V\bigsqcup \QV$, where all the vectors lie in $\QV$. Then any vector $q\in V\bigsqcup \QV$ can be written as the sum $\sum \alpha_i q_i$ with $\alpha_i=0,1 \in \ZZ$. We define the {\it length } of $q$ as the sum $\sum \alpha_i$. Denote it $\# q$.  
So we have $0\leq \# q \leq 2g+1.$ We remark that if $q$ is in the coset $\QV$ then $\#q$ is odd, since the sum of two quadratic forms corresponds to a unique vector in $V$.  

\begin{definition}
The set $S=\{q_1,\dots , q_{2g+1}\}$ is called an {\it Aronhold basis} if the Arf invariant of any element $q$ only depends on $\# q$ modulo $4$.
\end{definition}

An Aronhold basis exists \cite[Proposition 2.1]{GroHar2018}. Now, we introduce fundamental sets of $V$ which are closely related with the Aronhold bases, and see how to obtain an Aronhold basis from a fundamental set. 

 \begin{definition}\label{fundset} A set $\{v_1,\dots,v_{2g+1}\}$ of vectors in $V$ is called a {\it fundamental set} of $V$ if 
 \begin{itemize}
 \item $\sum\limits_{i=1}^{2g+1}v_i=0$ ({\it completeness}),
 \item $\langle v_i , v_j\rangle=1 $ for $1\leq i\neq j \leq$ ({\it being azygetic}).
 \end{itemize} 
\end{definition}

It is possible to obtain a fundamental set by an Aronhold basis. Indeed, if $\{q_1,\dots ,q_{2g+1}\}$ is an Aronhold basis then $\left\{q_1+q_2,\dots ,q_1+q_{2g+1}, \sum\limits_{i=2}^{2g+1}q_i\right\}$ is a fundamental set of $V$.

Conversely, we can obtain an Aronhold basis from a fundamental set as follows. Suppose that the set 
\begin{align*}
\mathscr{F}:=\{v_1,\dots , v_{2g+1} \}
\end{align*}
 is a fundamental set.  Now, let $q$ be any quadratic form. For $\mu\in \{0,1\}$, consider the set 
\begin{align*} \mathscr{E}_{q,\mu} := \{v_i\in \mathscr{F}\mid q(v_i)=\mu\}.
\end{align*}
Fix any $\mu \in \{0,1\}$. We may assume that $\mathscr{E}_{q,\mu}=\{v_1,\dots , v_k \}$ by reordering $\mathscr{F}$. We set $w:=\sum\limits_{i=1}^kv_i$ and $q_i=q+w+v_i$ for $i = 1,\dots , 2g+1$. Define $\mathcal{A}:=\{q_1,\dots , q_{2g+1}\}$. 

\begin{proposition}\label{prop:Aron} $\mathcal{A}$ is an Aronhold basis.
\end{proposition}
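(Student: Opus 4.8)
The plan is to check, in turn, the three requirements for $\mathcal{A}=\{q_1,\dots,q_{2g+1}\}$ to be an Aronhold basis: that each $q_i$ lies in $\QV$; that $\mathcal{A}$ is linearly independent in $V\bigsqcup\QV$, hence a basis, so that $\#$ is defined on the whole space and $\#\bigl(\sum_{i\in I}q_i\bigr)=|I|$; and that the Arf invariant of $\sum_{i\in I}q_i$ depends only on $|I|$ modulo $4$. The first requirement is immediate: $q_i=q+(w+v_i)$ with $w+v_i\in V$, and since $\QV$ is a homogeneous space under $V$, the translate of a quadratic form by a vector is again a quadratic form.

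Next I would record, using the coordinate formulas for sums of quadratic forms (Section~\ref{sec:quadcoor}), that for $I\subseteq\{1,\dots,2g+1\}$ one has $\sum_{i\in I}q_i=q+\bigl(w+\sum_{i\in I}v_i\bigr)$ when $|I|$ is odd and $\sum_{i\in I}q_i=\sum_{i\in I}v_i\in V$ when $|I|$ is even. For linear independence, suppose $\sum_{i\in I}q_i=0$; since $0\in V$ whereas no element of $\QV$ lies in $V$, the set $I$ must have even size, so $\sum_{i\in I}v_i=0$. Here the azygetic hypothesis is what matters: if $J$ were a nonempty proper subset of $\{1,\dots,2g+1\}$ with $\sum_{j\in J}v_j=0$, then pairing this relation with $v_{i_0}$ for $i_0\in J$ gives $|J|-1\equiv 0$, while pairing it with $v_{i_1}$ for $i_1\notin J$ gives $|J|\equiv 0\pmod 2$ — a contradiction; hence the only subsets of $\{1,\dots,2g+1\}$ with vanishing sum are $\emptyset$ and the whole (odd-sized) set, which forces $I=\emptyset$.

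For the Arf invariant, take $q'=\sum_{i\in I}q_i\in\QV$ with $\ell:=|I|$ odd, so that $q'=q+u_I$ with $u_I:=w+\sum_{i\in I}v_i$ and, by \eqref{arfproperty1}, $a(q')=a(q)+q(u_I)$. I would expand $q(u_I)=q(w)+q\bigl(\sum_{i\in I}v_i\bigr)+\bigl\langle w,\sum_{i\in I}v_i\bigr\rangle$ and then, via the iterate $q\bigl(\sum_j x_j\bigr)=\sum_j q(x_j)+\sum_{j<j'}\langle x_j,x_{j'}\rangle$ of the defining identity, express everything in terms of $k=|\mathscr{E}_{q,\mu}|$, of $\ell$, and of the overlap $t:=|\mathscr{E}_{q,\mu}\cap I|$, using that $\langle v_m,v_{m'}\rangle=1$ for $m\neq m'$ and that $q(v_m)=\mu$ exactly for the $v_m\in\mathscr{E}_{q,\mu}$. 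A short count shows that $t$ enters only through terms with an even coefficient, hence drops out modulo $2$, leaving
\begin{equation*}
a\bigl({\textstyle\sum_{i\in I}q_i}\bigr)\equiv a(q)+(k-\ell)\mu+\binom{k}{2}+\binom{\ell}{2}+\ell+k\ell\pmod 2 .
\end{equation*}
With $a(q),k,\mu$ fixed, the right-hand side is a function of $\ell$ alone, and since $\binom{\ell+4}{2}-\binom{\ell}{2}$, $4\mu$, $4k$ and $4$ are all even, it is unchanged under $\ell\mapsto\ell+4$; this is exactly the Aronhold condition, because every element of $\QV$ is of the form $q'$ above with $\ell=\#q'$.

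I expect the Arf-invariant computation to be the only real obstacle: one must organize the expansion of $q(u_I)$ so that the overlap $|\mathscr{E}_{q,\mu}\cap I|$ — which a priori appears both in the linear part $\sum_{i\in I}q(v_i)$ and in the bilinear part $\langle w,\sum_{i\in I}v_i\rangle$ — cancels modulo $2$. The parity bookkeeping needed to identify $\sum_{i\in I}q_i$ in the coordinates of Section~\ref{sec:quadcoor}, and the azygetic contradiction, are then routine.
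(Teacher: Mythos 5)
Your proof is correct, and it reaches the paper's conclusion by a somewhat different decomposition of the work. For the basis step, the paper shows that $\mathcal{A}$ spans $V\bigsqcup\QV$, invoking along the way the fact that any $2g$-element subset of an azygetic set is a basis of $V$; you instead prove linear independence directly, and your pairing argument --- pairing a hypothetical relation $\sum_{j\in J}v_j=0$ against $v_{i_0}$ with $i_0\in J$ and against $v_{i_1}$ with $i_1\notin J$ to obtain contradictory parities of $|J|$ --- is precisely the computation underlying that fact, so your route is if anything more self-contained. For the Arf-invariant step, the paper proceeds in two stages: it first checks that all $a(q_i)$ coincide, and then telescopes the identity $a(q_1+q_2+q_3)=a(q_1)+a(q_2)+a(q_3)+\langle v_1,v_2\rangle$ to show that increasing the length by $2$ flips the Arf invariant. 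You instead compute $a\bigl(\sum_{i\in I}q_i\bigr)=a(q)+q(u_I)$ in one shot and expand $q(u_I)$ over the fundamental set; the key point, which you correctly identify and which does check out, is that the overlap $t=\lvert\mathscr{E}_{q,\mu}\cap I\rvert$ enters the linear part as $\ell-t$ and the quadratic part as $\binom{k+\ell-2t}{2}\equiv\binom{k+\ell}{2}+t\pmod 2$ (equivalently, with coefficient $-2t$ in your organization of the expansion), so it cancels, leaving a closed formula in $k$, $\ell$, $\mu$ alone that is invariant under $\ell\mapsto\ell+4$ and flips under $\ell\mapsto\ell+2$, thereby recovering the paper's incremental statement as a corollary. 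The paper's telescoping avoids the bookkeeping with $t$ at the cost of an extra intermediate lemma; your closed formula is more explicit and gives the actual value of the Arf invariant on each length class.
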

\begin{proof} First of all, we show that $\mathcal{A}$ spans $V\bigsqcup \QV$.  
Suppose that $v \in V$. Note that, the condition of being azygetic for a fundamental set implies that any subset of $\mathscr{F}$ with $2g$ elements forms a basis of $V$.
 Since any $2g$-subset of $\mathscr{F}$ forms a basis, we can write $v=v_{i_1}+\dots + v_{i_n}$ as a linear combination of vectors in $\mathscr{F}$. Thanks to the completeness property of $\mathscr{F}$, we may assume that $n$ is even. So $v=q_{i_1}+\dots +q_{i_n}$, since $\Char \mathbb{F}_2 = 2$.   
Now suppose that $q'$ is a quadratic form, then $q+q'$ is a vector $v\in V$. We write $v+w=v_{i_1}+\dots +v_{i_n}$ as a linear combination of $v_1,\dots , v_{2g}$. We may assume that $n$ is odd because of the completeness property in Definition~\ref{fundset}. Now, we have

\begin{align*} 
q'=\sum\limits_{j=1}^nq+w+v_{i_j}=\sum\limits_{j=1}^nq_{i_j}&.\\
\end{align*}
Therefore it forms a basis for $V\bigsqcup \QV$ since $\dim V\bigsqcup \QV=2g+1$.

Now, we need to show that the Arf invariant of any quadratic form only depends on its length modulo $4$. We start to prove it by starting with the quadratic forms in $\mathcal{A}$. So we compute 
\begin{align*} a(q_i)&=a(q+w+v_i)\\
&=a(q)+q(w+v_i)\\
&=a(q)+q(w)+q(v_i)+\langle w, v_i \rangle\\
&= 
 \begin{cases}
       a(q)+q(w)+\mu+k-1 \text{ \ if \ } i\in\{1,\dots , k\},\\
      a(q)+q(w)+\mu+1+k \text{ \ otherwise}.\\
     \end{cases}
\end{align*}
Both cases are equal modulo $2$. So we have $a(q_1)=\dots = a(q_{2g+1})$. We show that $a(q'')=a(q')+1$ for any $q',q'' \in  \QV$ with $\#q'' = \#q'+2$.

Let $\#q'=n$. Write $q'=q_{i_1}+\dots   +q_{i_{n}}$ and $q''=q_{j_1}+\dots  +q_{j_{n+2}}$ in terms of quadratic forms in $\mathcal{A}$.
\begin{align*}
a(q')&=a(q_{i_1}+\dots +  +q_{i_{n}})\\
&=a(q_{i_1})+a(q_{i_{2}})+a(q_{i_{3}}+\dots + q_{i_{n}}) + \langle q_{i_1}+q_{i_2},q_{i_1}+q_{i_{3}}+\dots + q_{i_{n}}\rangle \\ 
&=a(q_{i_1})+a(q_{i_{2}})+a(q_{i_{3}}+\dots + q_{i_{n}}) + \langle v_{i_1}+v_{i_2},v_{i_1}+v_{i_{3}}+\dots + v_{i_{n}}\rangle \\ 
&=a(q_{i_1})+a(q_{i_{2}})+\dots + a(q_{i_{n}})\\
&\quad+\langle v_{i_1}+v_{i_2},v_{i_1}+v_{i_{3}}+\dots + v_{i_{n}} \rangle +\langle v_{i_3}+v_{i_4},v_{i_3}+v_{i_{5}}+\dots + v_{i_{n}}\rangle \\
&\quad+\dots +\langle v_{i_{n-2}}+v_{i_{n-1}},v_{i_{n-2}}+v_{i_{n}} \rangle \\
&=a(q_{i_1})+a(q_{i_{2}})+\dots + a(q_{i_{n}})+(n-1)+(n-2)+\dots +2+1 \Mod 2\\
&=a(q_{i_1})+a(q_{i_{2}})+\dots + a(q_{i_{n}})+\frac{(n-1)}{2} \Mod 2.
\end{align*}
Similarly,
\begin{align*}
a(q'')=a(q_{j_1}+\dots +  +q_{j_{n+2}})=a(q_{j_1})+a(q_{j_{2}})+\dots + a(q_{j_{n+2}})+\frac{(n+1)}{2} \pmod2.
\end{align*}
Since all the quadratic forms in $\mathcal{A}$ have the same Arf invariant, we have $a(q'')=a(q')+1$. 
\end{proof}  

\begin{remark}
Proposition~\ref{prop:Aron} is a coordinate free reformulation of \cite[Theorem A.II.1.1]{RauFar1974}.
\end{remark}

The Aronhold bases enable us to determine the Arf invariant of any quadratic form. 

\begin{proposition}\label{prop:Aronevod} Let $\mathcal{A}$ be any Aronhold basis. For any $q\in \mathcal{A}$, we have 
\begin{align*}
a(q)=
 \begin{cases}
      0 \text{ \ \ \ for \ } g= 0,1\Mod 4,\\
      1 \text{ \ \ \ for \ } g= 2,3\Mod 4.
     \end{cases}
\end{align*}

\end{proposition}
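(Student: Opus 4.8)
The plan is to compute $a(q)$ for a single conveniently chosen Aronhold basis $\mathcal{A}$ and then invoke the fact (established right after Proposition~\ref{prop:Aron}, where all elements of $\mathcal{A}$ were shown to have the same Arf invariant) that the common value $a(q)$ for $q\in\mathcal{A}$ is independent of which Aronhold basis we picked. Concretely, I would first argue the independence statement itself: given two Aronhold bases $\mathcal{A}=\{q_1,\dots,q_{2g+1}\}$ and $\mathcal{A}'=\{q_1',\dots,q_{2g+1}'\}$, the defining property says the Arf invariant of any element of $V\bigsqcup\QV$ depends only on its length modulo $4$ with respect to that basis. Since $Q:=q_1'$ lies in $V\bigsqcup\QV$, it has some length $\ell$ with respect to $\mathcal{A}$, and since $q_1'\in\QV$ its length is odd, hence $\ell\equiv 1$ or $3\Mod 4$; using the length-mod-$4$ rule for $\mathcal{A}$ one gets $a(q_1')=a(q_1)$ if $\ell\equiv 1$ and $a(q_1')=a(q_1)+1$ if $\ell\equiv 3$. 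To rule out the second possibility I would use completeness: summing all $2g+1$ elements of a fundamental set gives $0$, which transfers to the Aronhold basis via the dictionary in the paragraph before Proposition~\ref{prop:Aron}, forcing a parity constraint that pins down $\ell\bmod 4$ consistently; alternatively, and more cleanly, I would simply note that it suffices to prove the formula for one explicit Aronhold basis, since the value $a(q)$ for $q\in\mathcal{A}$ does not change under $\Sp$-action and any two Aronhold bases are related by such an action.

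Next, the computational core: exhibit one explicit Aronhold basis in coordinates and evaluate its Arf invariant. Using the coordinate description from Section~\ref{sec:quadcoor}, I would take the fundamental set built from the standard symplectic basis — for instance $v_i$ running over $e_1,\dots,e_g,f_1,\dots,f_g$ together with $v_{2g+1}=\sum_{i=1}^{g}(e_i+f_i)$, which is complete by construction and azygetic because $\langle e_i,f_j\rangle=\delta_{ij}$ and the last vector pairs to $1$ with every $e_i$ and $f_i$. Then I would run the construction in the proof of Proposition~\ref{prop:Aron} with $q=q_0$ (the form $q_0(\lambda,\mu)=\lambda\cdot\mu$) and $\mu=0$: compute $\mathscr{E}_{q_0,0}$, the vector $w$, and then the resulting $q_i=q_0+w+v_i$. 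Since $q_0(e_i)=q_0(f_i)=0$ for all $i$ and $q_0\bigl(\sum(e_i+f_i)\bigr)=\sum 1 + \langle\sum e_i,\sum f_i\rangle = g + g\cdot 1 \equiv 0 \Mod 2$ when… — here one must be careful, and I would just directly tabulate $q_0$ on the fundamental set — one finds $w$ explicitly and hence each $q_i=\Ch{\eps}{\eps'}$ explicitly, after which $a(q_i)=\eps\cdot\eps'$ is a finite sum that visibly reduces to a function of $g\bmod 4$. Carrying this out should yield $a(q_i)=\binom{g}{2}\bmod 2$ or equivalently $0$ for $g\equiv 0,1\Mod 4$ and $1$ for $g\equiv 2,3\Mod 4$, matching the claim; I would double-check against the known small cases, e.g. $g=3$ where Aronhold bases consist of odd theta characteristics (bitangents), consistent with $a(q)=1$.

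An even slicker route, which I would actually prefer to present, avoids coordinates entirely: start from any Aronhold basis $\mathcal{A}=\{q_1,\dots,q_{2g+1}\}$ and use completeness of the associated fundamental set to get a relation among the $q_i$'s, then apply the second formula in \eqref{arfproperty1} iteratively — exactly as in the length-induction computation at the end of the proof of Proposition~\ref{prop:Aron} — to the element $\sum_{i=1}^{2g+1}q_i$, whose length is $2g+1$. The length-mod-$4$ rule then gives a relation between $a\bigl(\sum_i q_i\bigr)$ and the common value $a(q_1)$, while the corresponding fundamental-set relation forces $\sum_i q_i$ to equal a specific low-length element whose Arf invariant we can read off directly; comparing the two expressions isolates $a(q_1)$ as a function of $(2g+1)$ and of the telescoping pairing sum $\sum_{j=1}^{2g}j \equiv \binom{2g+1}{2} \Mod 2$, which is $g(2g+1)\equiv g\Mod 2$ — and then a second, more careful accounting of the mod-$4$ bookkeeping upgrades this to the mod-$4$ dichotomy. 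The main obstacle, in either approach, is the delicate modulo-$4$ bookkeeping: the individual Arf-additivity corrections live modulo $2$, so to see genuinely different behavior for $g\equiv 0$ versus $g\equiv 1$ (both giving $a=0$) as opposed to $g\equiv 2,3$ one must track how the pairing-sum corrections accumulate across the expansion of a length-$(2g+1)$ element, and make sure no sign or off-by-one slips in; the cleanest safeguard is to nail the coordinate computation for one basis and then lean on $\Sp$-invariance, rather than trying to do the whole thing abstractly.
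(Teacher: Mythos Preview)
Your proposal has a genuine error and misses the paper's much simpler argument.

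The explicit fundamental set you write down is \emph{not} a fundamental set: you propose $\{e_1,\dots,e_g,f_1,\dots,f_g,\sum_i(e_i+f_i)\}$, but the definition requires $\langle v_i,v_j\rangle=1$ for all $i\neq j$, whereas $\langle e_1,e_2\rangle=0$. So your ``computational core'' cannot be carried out as stated; constructing an honest fundamental set in coordinates is more delicate than this, and the subsequent tabulation of $q_0$ on it --- already hedged in your write-up --- never gets off the ground. Your ``slicker route'' is likewise only a sketch: you never actually produce the relation that pins down $a(q_1)$, and you yourself flag that the mod-$4$ bookkeeping is where it would slip. The independence detour (showing the common value is the same for any Aronhold basis, or invoking an $\mathrm{Sp}$-action) is also unfinished, and in fact unnecessary.

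The paper's proof is a one-line counting argument that applies directly to \emph{any} Aronhold basis $\mathcal{A}$, with no need for coordinates, explicit bases, or an independence claim. Since every quadratic form has a unique odd length in $\{1,\dots,2g+1\}$ and its Arf invariant depends only on that length modulo $4$, the forms of length $\equiv 1\pmod 4$ all share the value $a(q)$ for $q\in\mathcal{A}$, and those of length $\equiv 3\pmod 4$ share $a(q)+1$. One then evaluates
\[
\sum_{\substack{1\le i\le 2g+1\\ i\equiv 1\ (\mathrm{mod}\ 4)}}\binom{2g+1}{i}
=
\begin{cases}
2^{g-1}(2^g+1)&\text{if }g\equiv 0,1\pmod 4,\\
2^{g-1}(2^g-1)&\text{if }g\equiv 2,3\pmod 4,
\end{cases}
\]
and compares with the known cardinalities $\#\QVE=2^{g-1}(2^g+1)$ and $\#\QVO=2^{g-1}(2^g-1)$ to read off $a(q)$. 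This avoids every obstacle you identified.
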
 
\begin{proof} Any quadratic form $q$ can be written uniquely as a linear combination of quadratic forms in $\mathcal{A}$, and $a(q)$ depends only on $\#q$ modulo 4. So if we count the lengths of the quadratic forms which are 1 modulo 4 as follows,
\begin{align*}
\sum\limits_{i=1 \Mod 4}^{2g+1}{{2g+1}\choose{i}}= 
 \begin{cases}
       2^{g-1}(2^g+1) \text{ \ \ \ for \ } g= 0,1\Mod 4,\\
      2^{g-1}(2^g-1) \text{ \ \ \ for \ } g= 2,3\Mod 4\\
     \end{cases}
\end{align*}
then the proposition follows since we have $2^{g-1}(2^g-1)$ and $2^{g-1}(2^g+1)$ odd and even quadratic forms respectively. 
\end{proof}

\begin{remark}\label{determination}
Now, we can determine the Arf invariant of any quadratic form from its length, since we know the Arf invariant of a quadratic form of length $1$. 
\end{remark}

\subsubsection{Labeling}\label{labelling}
We consider an Aronhold basis $\{q_1,\dots , q_{2g+1}\}$. Any quadratic form can be written uniquely as the sum of odd many $q_i$'s. 
\begin{align*}
&q_i \text{  \quad \quad \quad \quad \quad of  length } 1,\\
&q_i+q_j+q_k \text{ \quad of length } 3,\\
&\vdots\\
&\sum\limits_{i=1}^{2g+1}q_i \text{ \quad of length } {2g+1}.
\end{align*}

Thanks to Proposition~\ref{prop:Aronevod}, we can determine whether a quadratic form is even or odd from its length. 

In addition, we can label any quadratic form by an odd cardinality subset of $\{1,\dots,2g+1\}$ of odd cardinality. For an odd number $k$ in $\{1,\dots,2g+1\}$, the set $I:=\{i_1,\dots,i_k\}$ labels the quadratic form $q=\sum\limits_{j=1}^{k}q_{i_j}$ uniquely since the linear expression is unique. We call such a set $I$ the \emph{label}. We denote $q_I$ the quadratic form $q$. For our purpose, we are interested in labeling the quadratic forms. But, incidentally, note that any vector in $V$ can be labeled via even cardinality subsets of  $\{1,\dots,2g+1\}$ in the same way as the quadratic forms are labeled. 

Let $I_1,\dots, I_k$ be the labels for some quadratic forms on $V$ with $k$ is odd. Since $\QV \bigsqcup V$ is a vector space over $\FF_2$, the pairs of the same quadratic forms and the vectors are cancelled in the sum $q_{I_1}+\dots +q_{I_k}$. So it is labeled by $I_1\triangle \dots \triangle I_k$, where $\triangle$ denotes the symmetric difference of set. 

Finally, notice that once we fix an Aronhold basis the labeling is naturally unique. 

\subsubsection{Syzygetic Tetrads and Steiner Sets}\label{secsteset}

\begin{definition}\label{DefSyzAzy}
A set of three elements $q_1,q_2,q_3$ in $\QV$ is called a {\it syzygetic triad} (resp. {\it azygetic triad}) if 
\begin{equation*}
a(q_1)+a(q_2)+a(q_3)+a(q_1+q_2+q_3)=0 \text{\ (resp.\  $=1$)}. 
\end{equation*}
\end{definition}

A syzygetic triad $\{q_1,q_2,q_3\}$ can be completed into a set of four quadratic forms  
$$\{q_1,q_2,q_3,q_1+q_2+q_3\}$$ that adds up to zero. Such a set is called a {\it syzygetic tetrad}. By Definition~\ref{DefSyzAzy}, any $3$-subset of this tetrad forms a syzygetic triad. Being syzygetic reflects on the labeling of quadratic forms as follows. Suppose that $q_{I_i}$ is a quadratic form
labelled by $I_i\subset \{1,\dots , 2g+1\}$ for $1\leq i \leq 4$.
Notice that, $\{q_{I_i}\mid i=1,\dots ,4\}$ is a syzygetic tetrad if and only if $I_1\triangle \dots \triangle I_4=\emptyset$ since being syzygetic for $q_{I_i}$'s means that $\sum\limits_{i=1}^4q_{I_i}=0$.

Syzygetic tetrads yield some sets called {\it Steiner sets} which classify syzygetic tetrads of odd quadratic forms.

\begin{definition}
For any $v\in V$, we define the {\it Steiner set}
\begin{equation*}
{\bf S}_v:=\Big\{ q \in \QVO \mid  q+v\in \QVO\Big\}. 
\end{equation*}
\end{definition}

\begin{remark}\label{LabSyz} There are $2^{2g}-1$ Steiner sets. Each Steiner set has $2^{g-2}(2^{g-1}-1)$ elements paired by the translation $q\mapsto q+v$. Such two pairs form a syzygetic tetrad. 
\end{remark}

\begin{remark}\label{SteSet}
A characterization for an odd quadratic form $q$ to belong to ${\bf S}_v$ is the equality $q(v)=0$ holds. It follows from $a(q+v)=a(q)+q(v)$. 
Also note that
\begin{equation}\label{intSte}
\# {\bf S}_v\cap {\bf S}_{v'}=\begin{cases} 2^{g-1}(2^{g-2}-1) &
  \langle v,v'\rangle = 0, \\ 2^{g-2}(2^{g-1}-1) & \langle v,v' \rangle \neq 0. 
\end{cases}
\end{equation}

\end{remark}

\begin{corollary}\label{SymSte} Equation~\eqref{intSte} enables us to determine a symplectic basis so that a labeling from the set of Steiner sets as follows. Let $\mathfrak{S}$
be the set of all the Steiner sets. Now, we can construct a subset $\mathfrak{S}'=\{S_{1},\dots ,S_g, S'_{1},\dots , S'_g\}$ of $\mathfrak{S}$ with cardinality $2g$ such that 
\begin{align*}
& \#S_i\cap S_j = 2^{g-1}(2^{g-2}-1), \\
& \#S'_i\cap S'_j = 2^{g-1}(2^{g-2}-1), \\ 
& \#S_i\cap S'_j = \begin{cases}2^{g-2}(2^{g-1}-1) & \text{if } i=j, \\
2^{g-1}(2^{g-2}-1) & \text{if } i\neq j.
\end{cases} 
\end{align*} 
Note that the vectors corresponding to each element of $\mathfrak{S}'$ form a symplectic basis of $V$.  Call them $e_1,\dots ,e_g, f_1,\dots, f_g$ respectively.  Now, we can write the coordinates of a given quadratic form $q$ on $V$ with respect to $e_1,\dots ,e_g, f_1,\dots, f_g$ thanks to Remark~\ref{SteSet}. More precisely, we can compute the coordinates $q(e_i), q(f_i)$ by checking whether $q$ is contained in $S_i, S'_i$ or not for $i=1,\dots g$.  This procedure gives a labeling which is defined in Section~\ref{labelling} in terms of coordinates. 
\end{corollary}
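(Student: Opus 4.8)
The plan is to use Equation~\eqref{intSte} as a dictionary that converts the cardinality of an intersection of two Steiner sets into the value of the symplectic pairing on the associated vectors. The first step is to observe that the assignment $\Phi\colon V\setminus\{0\}\to\mathfrak{S}$, $v\mapsto{\bf S}_v$, is a bijection: it is surjective by the definition of a Steiner set, and since by Remark~\ref{LabSyz} there are $2^{2g}-1=\#(V\setminus\{0\})$ Steiner sets, it is injective as well. (A self-contained injectivity argument: for $v\neq v'$ put $w=v+v'\neq 0$; the quadratic form identity gives $q(v)+q(v')=q(w)+\langle v,v'\rangle$ for every $q\in\QV$, and after choosing a symplectic basis with first vector $w$ a direct count shows that, for $g\geq 2$, both $\{q\in\QVO\mid q(w)=0\}$ and $\{q\in\QVO\mid q(w)=1\}$ are non-empty, so some odd $q$ separates $v$ and $v'$, whence ${\bf S}_v\neq{\bf S}_{v'}$.) The second, trivial but essential, observation is that the two values appearing in \eqref{intSte}, namely $2^{g-1}(2^{g-2}-1)$ and $2^{g-2}(2^{g-1}-1)$, are distinct; hence $\#({\bf S}_v\cap{\bf S}_{v'})$ recovers $\langle v,v'\rangle\in\FF_2$.

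With these preliminaries I would construct $\mathfrak{S}'$ directly. Fix a symplectic basis $e_1,\dots,e_g,f_1,\dots,f_g$ of $(V,\langle,\rangle)$, which exists by the discussion recalled at the beginning of this section, and set $S_i:={\bf S}_{e_i}$ and $S_i':={\bf S}_{f_i}$. Substituting the relations $\langle e_i,e_j\rangle=\langle f_i,f_j\rangle=0$ and $\langle e_i,f_j\rangle=\delta_{ij}$ into \eqref{intSte} yields exactly the three displayed intersection identities, and injectivity of $\Phi$ shows these $2g$ Steiner sets are pairwise distinct, so $\#\mathfrak{S}'=2g$. For the converse claim, that the vectors attached to \emph{any} $\mathfrak{S}'$ obeying the three identities form a symplectic basis, I would run the dictionary backwards: with $v_i=\Phi^{-1}(S_i)$ and $v_i'=\Phi^{-1}(S_i')$, the identities force $\langle v_i,v_j\rangle=\langle v_i',v_j'\rangle=0$ and $\langle v_i,v_j'\rangle=\delta_{ij}$, and $2g$ vectors satisfying these relations with respect to the non-degenerate form $\langle,\rangle$ are automatically linearly independent, hence a symplectic basis of $V$.

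It remains to explain the labeling. Having renamed the recovered vectors $e_1,\dots,e_g,f_1,\dots,f_g$, I would invoke Remark~\ref{SteSet}: an odd quadratic form $q$ lies in ${\bf S}_v$ precisely when $q(v)=0$. Thus testing membership of $q$ in $S_i$ and in $S_i'$ returns $q(e_i)$ and $q(f_i)$ for all $i$, that is, the coordinate presentation $q=\Ch{\epsilon}{\epsilon'}$ of Section~\ref{sec:quadcoor}, which is the coordinate form of the labeling. For an even $q$ the same reading works after writing $q$ as the sum of an odd quadratic form and a vector and using additivity of the coordinates.

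The real content of the statement is carried by Equation~\eqref{intSte}, which I am entitled to assume since it is part of Remark~\ref{SteSet}; granting it, the proof is essentially bookkeeping. The only genuinely load-bearing points beyond that are the injectivity of $v\mapsto{\bf S}_v$ --- equivalently, the count of Steiner sets in Remark~\ref{LabSyz} --- and the fact that the two intersection values in \eqref{intSte} differ, since it is exactly this that upgrades the correspondence between $\mathfrak{S}'$ and symplectic bases from a one-sided construction to a genuine equivalence.
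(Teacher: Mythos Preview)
Your proposal is correct and follows the same approach as the paper, which in fact offers no separate proof: the corollary is stated as an immediate consequence of Equation~\eqref{intSte} and Remark~\ref{SteSet}, with the argument folded into the statement itself. You have simply made explicit the steps the paper leaves implicit---the bijection $v\mapsto{\bf S}_v$, the distinctness of the two intersection values, the converse direction, and the handling of even $q$---so your write-up is more detailed than, but entirely in line with, the paper's treatment.
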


\subsection{Theta functions and characteristics}
In this part, we review basic definitions and properties of theta functions, and we see how to relate them with the quadratic forms. 

Let $g\geq 0$ and 
\begin{equation*}
\mathbb{H}_{g}=\{\tau \in
  \textrm{GL}_{g}(\mathbb{C}) \mid  \; ^{t}{\tau} = \tau, \ \textrm{Im} \tau > 0\}
\end{equation*}
be the Siegel upper half space consisting of  $g\times g$ complex matrices with positive definite imaginary part. 

\begin{definition}\label{Def:theFun}
For $\tau \in \mathbb{H}_g$, $z=(z_1,\ldots,z_g) \in \mathbb{C}^g$ and 
$$
[q]  = \Ch{\varepsilon}{\varepsilon'} \in \mathbb{Z}^g \oplus \mathbb{Z}^g, 
$$
{\it the theta function with characteristic $[q]$} is 
$$
\vartheta[q](z,\tau) = \sum_{n \in \ZZ^g}
\exp\left(\pi i (n + \varepsilon/2) \tau {^t (n + \varepsilon/2)} + 
2\pi i(n + \varepsilon/2){^t (z+\varepsilon'/2)}\right).
$$
\end{definition}

This is an analytic function on $ \CC^g \times \HH_{g}$. The evaluation of $\vartheta[q](z,\tau)$ at $z=0$ is called a {\it theta constant} ({\it Thetanullwert}) (with characteristic $[q]$), which is denoted by $\vartheta[q](\tau)$. The characteristic $[q]$ is called {\it even} (resp. {\it odd}) if $\varepsilon\cdot \varepsilon'$ is even (resp. odd). Since
\begin{equation}\label{eq:evenoddchar}
\vartheta \Ch{\eps}{\eps' }(-z,\tau) = (-1)^{\eps \cdot \eps'}
\cdot  \vartheta \Ch{\eps}{\eps'}(z,\tau),
\end{equation}
\cite[Theorem I.2]{RauFar1974}, the theta function $\vartheta \Ch{\eps}{\eps' }$ is even (resp. odd) if and only if $\varepsilon\cdot \varepsilon'=0$ (resp. $=1$).
In addition, note that a characteristic $[q]$ is odd if and only if the theta constant $\vartheta[q](\tau)$ is identically $0$ for all $\tau \in \HH_g$. We also have that \cite[Theorem I.3]{RauFar1974}
\begin{equation}\label{eq:charmodtwo}
\vartheta \Ch{\eps + 2m}{\eps' + 2 n }(z,\tau) = (-1)^{n \cdot \eps}
\cdot  \vartheta \Ch{\eps}{\eps'}(z,\tau).
\end{equation}

Using the notation of Section~\ref{sec:quadcoor}, we identify a characteristic $[q]$ modulo $2$ with a quadratic form over $\FF_2$ which is denoted by $q$. The quadratic from $q_0$ defined in ~\eqref{q0} is identified with the characteristic $\Ch{0}{0}$. Conversely, fixing a symplectic basis, if we start with a quadratic form $q$ then we write $q=\Ch{\eps}{\eps'}$ with entries $\{0,1\}$ in terms of coordinates. We associate $\Ch{\eps}{\eps'}$ to the characteristic of the theta function $\vartheta \Ch{\eps + 2m}{\eps' + 2 n }(z,\tau)$ for all $n,m \in \ZZ$. The characteristic $\Ch{\eps}{\eps'}$ has only an impact on the sign of the theta function because of Equation~\eqref{eq:charmodtwo}. From now on, we only use characteristics with entries $0,1$. 

Now, we specify a particular symplectic vector space over $\FF_2$ related to algebraic curves. From now on, unless otherwise stated, we let $\calC$ to be a non-hyperelliptic curve of genus $g>0$
$\C$ and $\boldsymbol{\omega}=(\omega_1,\ldots,\omega_g)$ be a basis of regular
differentials on $\calC$. Let $\boldsymbol{\delta}=(\delta_1,\ldots,\delta_{2g})$ be a symplectic basis of
$H_1(\calC,\Z)$ such that the intersection pairing has the matrix
\begin{align*}
 \begin{pmatrix} 0_g & I_g \\
I_g & 0_g 
\end{pmatrix}
\end{align*}
with $I_g$ and $0_g$ are the $g\times g$ identity and zero matrices respectively.

With respect to these choices, the period matrix of $\calC$ is
$\Omega=[\Omega_1,\Omega_2]$, where $$\Omega_1=\left(\int_{\delta_i} \omega_j\right)_{1
\leq i \leq g, 1
\leq j \leq g},$$ 
$$\Omega_2=\left(\int_{\delta_i} \omega_j\right)_{ g+1 \leq i \leq 2g,1 \leq j
  \leq g}.$$ We consider a second basis $\boldsymbol{\eta}=(\eta_1,\dots , \eta_g)$  of
regular differentials  obtained by $\boldsymbol{\eta}=
\Omega_1^{-1} \boldsymbol{\omega}$. The period matrix with respect to this new
basis is $[\textrm{id}, \tau]$, where $\tau=\Omega_1^{-1} \Omega_2 \in
\mathbb{H}_g$. This matrix is called the {\it Riemann matrix}. We let 
$$\Jac(\calC) = \CC^g/(\ZZ^g + \tau \ZZ^g).$$

Let us denote 
$$e_i = {\left(\frac{1}{2} \int_{\delta_i} \eta_j \right)_{1\leq j
    \leq g}} =  (0,\ldots,0,\frac{1}{2},0,\ldots,0) \in \CC^g$$
and
$$f_i = { \left(\frac{1}{2} \int_{\delta_{g+i}} \eta_j\right)_{1\leq j
    \leq g}} \in \CC^g,$$
and
$$v=\sum_{i=1}^g \lambda_i e_i + \mu_j f_j = (\lambda,\mu)$$
with $\lambda,\mu \in\ZZ^g$ for $1 \leq i \leq g$.

Now, we let $W$ be the $\ZZ$-module generated
by $e_1,\ldots,e_g,f_1,\ldots,f_g$ so that 
\begin{align*}
\Jac(\calC)[2] = W/(\ZZ^g+\tau \ZZ^g).
\end{align*}
An element $v\in W$ acts on a theta function. Indeed, if $[q]=\Ch{\epsilon}{\epsilon'}$ is a characteristic and  
$v=(\lambda,\mu) \in W$ then 
\begin{equation}\label{lem:actonchar} 
\vartheta[q](z+v,\tau)=\ex\left(-\frac{1}{4} \mu {^t (\epsilon'+\lambda)} -
  \frac{1}{2} \mu
{^t z} - \frac{1}{8} \mu \tau {^t\mu}\right) \cdot \vartheta \car{\epsilon +
  \mu}{ \epsilon' + \lambda}(z,\tau)
\end{equation} \cite[Theorem I.5]{RauFar1974}.

Thanks to Equation~\eqref{lem:actonchar}, we write $\big[[q]+v\big]=\car{\epsilon + \mu}{ \epsilon' + \lambda}$. 

At the same time, $V=\Jac(\calC)[2]$ is a vector space over $\FF_2$ of $2g$ dimension. The Weil pairing defines a nondegenerate symplectic form on $V$. We may induce the symplectic basis of $V$ via $e_i, f_i$'s. Now thanks to identifications in Section~\ref{sec:quadcoor}, the theory of quadratic forms on $V$ is coherent with the theta characteristics and $(\lambda,\mu)$ modulo $2$. We denote $\bar{v} \in V$ the class of $v$, where the class is identified with the vector $(\lambda \Mod 2,\mu \Mod 2)$. So the quadratic form $q+\bar{v}$ is the quadratic form associated to the theta characteristic $\big[[q]+v\big]$.

\subsection{Theta characteristic divisors}\label{thetaCharDiv}

In this section, we introduce theta characteristic divisors of $\calC$. Moreover, we explain the link between such divisors and the quadratic forms on $\Jac(\calC)[2]$ over $\FF_2$. For more results and detailed explanations, we refer to \cite[Chapter 1]{ArbCorGriHar1985}.

Let $\calC_d$ be the $d$-fold symmetric product of $\calC$ which is identified with the set of effective divisors of degree $d$. Fix a point $Q$ on $\calC$. The {\it Abel-Jacobi} map is defined by 
\begin{align*}
u_d: \calC_d &\longrightarrow \Jac(\calC) \\ 
D=\sum\limits_{i}m_iP_i &\longmapsto \sum\limits_{i}m_i\int_Q^{P_i}(\eta_1 , \dots , \eta_g).
\end{align*}

The map depends on the choice of the fixed point $Q$. Also, the value of the integral depends on the path chosen to integrate, however, $u_d(D)$ is well defined in $\Jac(\calC)$. It is possible to extend $u_d$ to noneffective divisors of degree $d$. Abel's theorem~\cite[Chapter 1]{ArbCorGriHar1985} assures that this map is invariant under the linear equivalence between divisors. Denote $\Pic(\calC)$ the Picard group of $\calC$ and $\Pic^d(\calC)$ the subgroup of the divisor classes of degree $d$ in $\Pic(\calC)$. By Abel's theorem, $u_d$ leads to a bijection from $\Pic^d(\calC)$ into $\Jac(\calC)$. Moreover, it induces an isomorphism between the group $\Pic^0(\calC)$ of the divisor classes of degree $0$ in $\Pic(\calC)$ and the Jacobian $\Jac(\calC)$. We keep these identifications in mind while we are studying theta characteristic divisors in the following part. 

The {\it Riemann Theta function} $\theta(z,\tau)$ of $\Jac(\calC)$ is the theta function $\vartheta\Ch{0}{0}(z,\tau)$ with the characteristic $\Ch{0}{
0}$. 

Since it is an analytic function on $\CC^g \times\HH_{g} $ and quasi periodic with respect to the lattice $\ZZ^g + \tau \ZZ^g$ given by $[\textrm{id}, \tau]$
 it defines a divisor $\Theta $ of $\Jac(\calC)$ which is the zero divisor of $\vartheta(z,\tau)$.

We denote $\ell(D)$ the dimension of the Riemann-Roch space of $D$. The following theorem allows us to relate certain divisors with quadratic forms. 

\begin{theorem}[Riemann Singularity Theorem]\label{riesinthe}
Let $\kappa$ be the
canonical divisor of $\calC$. 
There exists a unique divisor class $D_0$ of degree $g-1$ with $2 D_0 \sim
\kappa$ and $\ell(D_0)$ is even
such that
$u_{g-1}(\calC_{g-1}) = \Theta + u_{g-1}(D_0).$
Moreover for any $v \in V$, $\Mult_{v}(\Theta) = \ell(D_0+v)$.
\end{theorem}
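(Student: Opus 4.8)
The plan is to prove the Riemann Singularity Theorem in the classical way, following Riemann's original argument as presented in \cite[Chapter VI]{ArbCorGriHar1985}. First I would recall the Riemann vanishing theorem: for a generic point $e \in \Jac(\calC)$ of the form $e = u_{g-1}(D) + u_{g-1}(D_0)$ for some effective divisor $D$ of degree $g-1$, the function $z \mapsto \theta(z - e, \tau)$ either vanishes identically or has exactly $g-1$ zeros on $\calC$ (counted via the Abel--Jacobi embedding), and these zeros are precisely the points of $D$. Here $D_0$ is the Riemann constant, whose existence as a divisor class of degree $g-1$ with $2D_0 \sim \kappa$ is part of the classical package; the parity statement $\ell(D_0) \equiv 0$ follows once one identifies $\Theta + u_{g-1}(D_0)$ with $u_{g-1}(\calC_{g-1})$, since that image is symmetric under $v \mapsto -v$ (because $\calC_{g-1}$ and $K - \calC_{g-1}$ coincide in $\Pic^{g-1}$ via Riemann--Roch), forcing $D_0$ to be a theta characteristic, and then the multiplicity computation below at $v=0$ pins down the parity.

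The heart of the matter is the multiplicity formula $\Mult_v(\Theta) = \ell(D_0 + v)$. I would proceed by relating the order of vanishing of $\theta$ at a point $v = u_{g-1}(D) + u_{g-1}(D_0) \in \Theta$ to the dimension of the linear system $|D|$. The key tool is the Riemann--Kempf singularity theorem in its analytic incarnation: one writes down the Taylor expansion of $\theta(z)$ at such a point and shows, using the Riemann bilinear relations and the fact that the partial derivatives of $\theta$ of order $\le r$ all vanish at $v$ if and only if $\theta$ vanishes along all "osculating" directions coming from $\text{Sym}^r$ of the image of $\calC$, that the lowest non-vanishing term of the expansion is a determinant (a "$\Gamma_r$" expression) built from the differentials $\eta_j$ evaluated at the points of $D$. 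The rank of this determinantal expression is governed by the geometry of the canonical map restricted to $D$, and by the geometric Riemann--Roch theorem $\ell(D+v) - 1$ equals $g-1$ minus the dimension of the linear span of the canonical images of the points of an effective representative. Matching these two computations gives that the multiplicity of $\Theta$ at $v$ equals $\ell(D_0 + v)$, first for $v$ in the image of $u_{g-1}$, i.e. on $\Theta$, and trivially (both sides zero) off $\Theta$.

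Concretely, the steps in order are: (1) recall/cite the Riemann vanishing theorem and the existence of the Riemann constant $D_0$ with $2D_0 \sim \kappa$; (2) deduce that $u_{g-1}(\calC_{g-1})$ is a translate of $\Theta$ and that this forces $\ell(D_0)$ to be even by symmetry of $\Theta$ together with the base-case multiplicity count; (3) for $v \in \Theta$ write $v + u_{g-1}(D_0) = u_{g-1}(D)$ with $D$ effective of degree $g-1$ and $\ell(D) = r+1$; (4) expand $\theta$ near $v$ and identify the leading term with a determinant of periods/differentials whose vanishing order is exactly $r+1$, invoking geometric Riemann--Roch to control the rank; (5) conclude $\Mult_v(\Theta) = r+1 = \ell(D_0 + v)$, and note the formula holds trivially when $v \notin \Theta$. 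The main obstacle will be step (4): carrying out the local analytic expansion of the theta function carefully enough to see that the order of vanishing is governed precisely by $\ell(D)$ and not merely bounded by it — this requires the non-degeneracy input that the canonical curve is not contained in a hyperplane (which holds since $\calC$ is non-hyperelliptic, hence canonically embedded) so that the relevant Wronskian-type determinants do not vanish identically. I would lean on \cite[Chapter VI, \S1--\S3]{ArbCorGriHar1985} for the detailed local computation rather than reproducing it.
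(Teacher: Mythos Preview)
The paper does not prove this statement at all: Theorem~\ref{riesinthe} is stated as the classical Riemann Singularity Theorem and then used as a black box, with the surrounding section referring the reader to \cite{Dol2012} and \cite{ArbCorGriHar1985} for background. There is no proof environment following the theorem in the paper, and none of the subsequent text argues toward it; the paper simply invokes it to set up the correspondence between theta characteristic divisors and quadratic forms.

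Your outline is a faithful sketch of the standard argument in \cite[Chapter~VI]{ArbCorGriHar1985}, which is exactly the reference the paper defers to, so in that sense you are supplying what the paper deliberately omits. One small caution on your step~(2): the evenness of $\ell(D_0)$ is not really deduced in the classical treatment from the symmetry of $\Theta$ together with a ``base-case multiplicity count'' as you describe; rather, it comes out of the identification of $D_0$ with the Riemann constant and the explicit parity computation (essentially that the characteristic $\Ch{0}{0}$ is even). Your circular-sounding phrasing --- using the multiplicity formula at $v=0$ to pin down the parity of $\ell(D_0)$, when the multiplicity formula is what you are trying to prove --- should be straightened out: first establish $2D_0\sim\kappa$ and the translate statement via Riemann's vanishing theorem, then prove the multiplicity formula for all $v$, and only then read off $\ell(D_0)=\Mult_0(\Theta)$, which is even because $\theta\Ch{0}{0}$ is an even function. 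Otherwise the plan is sound.
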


A divisor (class) $D$ is called a {\it theta characteristic divisor (class)} if $2D \sim \kappa$. 
Now we have a correspondence between theta characteristic divisors of $\calC$ and quadratic forms on $\Jac(\calC)[2]$ over $\FF_2$ as follows.

Now, define $q_{D_0}(v):=\ell(D_0+v) \Mod{2}$. If $v$ is given by $(\lambda,\mu)$ with respect to a fixed symplectic basis then by Theorem~\ref{riesinthe} and Equation~\eqref{lem:actonchar} we have
\begin{align}\label{q0again}
q_{D_0}(v)&=\ell(D_0+v) \Mod{2} = \Mult_{v}(\Theta) \Mod{2} = \Mult_{v} \Bigg( \vartheta\Ch{0}{0} \Bigg) \Mod{2} \\
&= \Mult_{0} \Bigg( \vartheta\Ch{\lambda}{\mu} \Bigg) \Mod{2} = \lambda \cdot \mu \nonumber.
\end{align} 
We identify $q_{D_0}$ with $q_0$ that is defined in Equation~\eqref{q0}. 
Furthermore, any theta characteristic divisor $D$ is linearly equivalent to $D_0+v$ with $v=(\lambda , \mu)\in V$. Indeed, $D-D_0$ is a $2$-torsion point of $\Jac(\calC)$. We can associate $D$ to the quadratic form $q_D:=q_0+v$. Note that the Arf invariant of $q_D$
$$a(q_D)=a(q_0+v)= \Mult_{v}(\Theta) \Mod 2$$ since $\Mult_{v}(\Theta)$ is equal to the multiplicity of $\vartheta [q_D](z,\tau)$ at $0$ and the latter has the same parity as $q_D$. Thanks to Equation~\eqref{arfproperty1} and Theorem~\ref{riesinthe}, we have 
$$q_D(w)=a(q_D+w)+a(q) = \ell(D+w) + \ell(D) \Mod{2}.$$
for any $w\in V$.

Conversely, any quadratic form $q$ defines a divisor 
\begin{align}\label{corrthediv}
D_q:=D_0+q_0+q.
\end{align}

\begin{remark} \label{TheNoOfOddAndEvenTheDiv} To sum up, there is a one-to-one correspondence between the set of quadratic forms on $\Jac(\calC)[2]$ over $\mathbb{F}_2$ and the set of theta characteristic divisors. Moreover, an odd (resp. even) theta characteristic corresponds to an odd (resp. even) quadratic form. In the following section, we discuss extrinsic geometric objects which correspond to the theta characteristic divisors. 

\end{remark}

\subsection{Multitangents}

The basis of regular differentials $\{ \omega_1, \dots, \omega_g\}$ defines the canonical map 
\begin{align*}
\Phi : \calC &\rightarrow \PP^{g-1} \\
P & \mapsto (\omega_1(P):\dots : \omega_g(P)).
\end{align*}

Let $D$ be an effective theta characteristic divisor of $\calC$. We call $D$ a \emph{vanishing theta characteristic divisor} if $\ell(
D) > 1$. Note that $\Phi^*(\mathcal{O}_{\PP^{g-1}}(1))=\kappa$. Now, we let $H_D$ be any fixed hyperplane in $\PP^{g-1}$ such that $\Phi^*H_D\cdot \calC \sim 2D$.

\begin{definition}
We call such a hyperplane a {\it multitangent}. 
\end{definition}

\begin{remark} When $g=3$, the dimension of the Riemann-Roch space of any theta characteristic divisor is either $0$ or $1$ because $\calC$ is non-hyperelliptic. There are $28$ multitangents, in this case these geometric objects are known as ${\it bitangents}$.

For $g=4$, first of all, for any theta characteristic divisor $D$, $\ell(D)=0,1 \text{ or } 2 $ because of Clifford's theorem for divisors~\cite[Chapter III]{ArbCorGriHar1985}. The multitangents are known as {\it tritangents} in this case. There are $120$ tritangents which correspond to the effective odd characteristic divisors. To be more specific, the canonical model of $\calC$ lies on a smooth quadric if and only if there is not a vanishing theta characteristic divisor. In this case, we have exactly 120 tritangents. Otherwise, $C$ lies on a singular quadric $\mathcal{Q}$, then there is a unique effective even theta characteristic divisor, call $D_{\text{e}}$. The dimension $\ell(D_{\text{e}})=2$. So there is a one dimensional family of tritangents, that pass through the node of $\mathcal{Q}$. The tritangents which correspond to the effective odd theta characteristic divisors are the ones which do not pass through the node of $\mathcal{Q}$. Note that, such a curve arises from a del Pezzo surface of degree $1$ which follows from \cite[Theorem 24.4.iii]{Man1974}. In Section~\ref{DelPezzoSubsections}, we come back to this subject. 
\end{remark}

\begin{remark} Everything aside, if $\calC$ is a general curve of genus $g$, then $\ell(D)=0,1$ for any theta characteristic divisor $D$. So there is a unique hyperplane $H_D$ if $D$ is an effective odd theta characteristic divisor. In this case, notice that we have exactly $2^{g-1}(2^g-1)$ multitangents. For the generality condition, we refer to \cite{Har1982}. 
\end{remark}

\section{Computation of Theta Constants}\label{ALG}

In this section, we apply results from Section~\ref{background} to prove Theorem~\ref{mainTheorem}. This enables us to obtain Algorithm~\ref{AlgTheCon}. 
Recall that $\calC$ is a non-hyperelliptic curve of genus g.
Throughout the section, we fix a Riemann matrix $\tau$ of $\calC$ associated a normalized regular differentials $\boldsymbol{\eta}$ as introduced in Section~\ref{thetaCharDiv}. Thus we do not write $\tau$ in the notation of theta functions and constants. Recall that we denote $D_q$ the corresponding effective theta characteristic divisor to the quadratic form $q$. In addition, fix a theta hyperplane $H_{D_q}$ and also a linear polynomial $\beta_{{q}} \in \CC[X_1,\ldots,X_{g}]$ such that $H_{D_{q}}$ is the hyperplane with equation $\beta_{{q}} =0$.  We abuse the notation by identifying the canonical model of $\calC$ with itself. In addition, we expect the reader to be aware all the identifications which are described in Section~\ref{background} among quadratic forms, theta characteristics, theta characteristic divisors and multitangents.

We begin with two even characteristics $p_1,p_2$ and write $p_1+p_2=q_1+\overline{q}_1$. Let  $D_{q_1}, D_{\overline{q}_1}$ be the theta characteristics divisors associated to $q_1, \overline{q}_1$. We write
\begin{align*}
D_{q_1}\sim A_1+\dots +A_{g-1}, \text{  } D_{\overline{q}_1}\sim B_1+\dots +B_{g-1},
\end{align*}
where $A_i$'s (respectively $B_i$'s) are the tangency points  of the multitangent $\beta_{q_1}$ (respectively $\beta_{\overline{q}_1}$) for $i = 1,\dots , g-1$. 
Let $T=T_1+\dots + T_{2g-3}$ be an arbitrary generic
effective divisor of degree $2g-3$ on $\calC$ and be $\kappa=2(A_1+\dots + A_{g-1})$. By fixing a point $P_0$ on $\calC$, we introduce
$$f_{i,T}(P) := \vartheta[p_i](P+T-\kappa):=\vartheta[p_i]\left(\int_{P_0}^P\boldsymbol{\eta}+\sum_{i=1}^{2g-3}\int_{P_0}^{T_i}\boldsymbol{\eta}-2\sum_{i=1}^{g-1}\int_{P_0}^{A_i}\boldsymbol{\eta}\right).$$

According to Riemann theorem \cite[Theorem V.1]{RauFar1974}, $f_{i,T}(P)$ is a regular section of a line bundle
over $\calC$, and if
$f_{i,T}$ is not identically zero then its zero divisor $(f_{i,T})_0$
has degree $g$ and satisfies
$$(f_{i,T})_0 \sim D_0+ (p_i + q_0) + \kappa-T = D_{p_i}
+\kappa -T.$$
Since $\ell(\kappa + D_{p_i})=2g-2$, we let $\{t^{(1)}_i,\dots,t^{(2g-2)}_i\}$ be a basis of sections on the line bundle $[\kappa+D_{p_i}]$ (called \emph{Wurzelfunctionen} in Weber's book) which corresponds to a basis of $\calL({\kappa + D_{p_i}})$. Suppose that $t_i^{(j)}$ is given by the family of rational functions $t_{i,\alpha}^{(j)}$ with an open cover $\{U_{i,\alpha}\}_{\alpha\in I}$ of $\calC$ for $i=1,2$ and $j=1,\dots , 2g-2$. 

For each $i=1,2$, we can find an open cover $\{U_{i,\alpha}\}_{{\alpha}}$ of $\calC$ such that for each $k=1,\dots , 2g-3$ there exists $\alpha_k $ for which $T_k$ is not a pole of $t^{(j)}_{i,\alpha_k}$ for any $j=1,\dots ,2g-3$.
We define $\chi_{i,T}$ as the family of the following rational functions
\begin{equation}\label{det}
\chi_{i,T,\alpha}(P) =  \begin{vmatrix} t^{(1)}_{i,\alpha}(P) & \cdots &
  t^{(2g-2)}_{i,\alpha}(P) \\
t^{(1)}_{i,\alpha_{1}}(T_1) & \cdots &
  t^{(2g-2)}_{i,\alpha_1}(T_1) \\
\vdots & & \vdots \\
t^{(1)}_{i,\alpha_{2g-3}}(T_{2g-3}) & \cdots &
  t^{(2g-2)}_{i,\alpha_{2g-3}}(T_{2g-3}) \\
\end{vmatrix}, \quad 1 \leq i \leq 2,
\end{equation}
on $U_\alpha$ for all $\alpha$. Therefore, the sections $\chi_{i,T}$ and $t_{i}^{(j)}$ are of the same line bundle $[\kappa+D_{p_i}]$, since the determinant is a linear combination of $t_{i,\alpha}^{(j)}$'s. 

Since $\chi_{i,T}(T_j)=0$ for $1
\leq j \leq 2g-3$, we see that $(\chi_{1,T})_0 = T + R_i$, where $R_i$
is an effective divisor
of degree $g$, uniquely defined by $R_i+T \sim \kappa + D_{p_i}$.
Now $$(f_{i,T})_0 \sim D_{p_i}
+\kappa -T \sim R_i,$$
so actually $(f_{i,T})_0=R_i$. Therefore, $(f_{1,T})_0 - (f_{2,T})_0 =
R_1-R_2= (\chi_{1,T})_0 -
(\chi_{2,T})_0$ and there exists a constant $\lambda_T$ such that 
$$\frac{f_{1,T}(P)}{f_{2,T}(P)} = \lambda_T \cdot \frac{\chi_{1,T}(P)}{\chi_{2,T}(P)}.$$  

\begin{lemma} $\lambda_T$ does not depend on $T$.
If $T = A_2+\dots +A_{g-1}+A_1+\dots +A_{g-1}$ then
$$\frac{f_{1,T}(A_1)^2}{f_{2,T}(A_1)^2} = \frac{\vartheta[p_1](0)^2}{\vartheta[p_2](0)^2}.$$
If moreover $T'=A_2+\dots +A_{g-1}+ B_1+\dots +B_{g-1}$ then 
$$\frac{f_{1,T'}(P)^2}{f_{2,T'}(P)^2} = (-1)^{a(q_0+p_1+p_2)}
\cdot \frac{f_{2,T}(P)^2}{f_{1,T}(P)^2}.$$
\end{lemma}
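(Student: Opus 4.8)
The plan is to prove the three assertions in turn: independence of $T$ by a permutation-symmetry argument, the first evaluation by a direct substitution, and the second evaluation by substituting another divisor into the transformation law \eqref{lem:actonchar}. Throughout write $u$ for the Abel--Jacobi map, so $f_{i,T}(P)=\vartheta[p_i]\bigl(u(P)+\textstyle\sum_{k=1}^{2g-3}u(T_k)-u(\kappa)\bigr)$.

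\emph{Independence of $T$.} The key point is that $f_{i,T}(P)$ depends on the $2g-2$ points $P,T_1,\dots,T_{2g-3}$ only through the effective divisor $P+T$, hence is invariant under permuting them, whereas $\chi_{i,T}(P)$, being the determinant \eqref{det} whose rows are indexed by $P,T_1,\dots,T_{2g-3}$, gets multiplied by the sign of any such permutation. Since $f_{1,T}/f_{2,T}$ and $\chi_{1,T}/\chi_{2,T}$ are sections of the one degree-zero line bundle $[D_{p_1}-D_{p_2}]$ with the common divisor $R_1-R_2$, their quotient $\lambda_T$ is a genuine scalar, unaffected by the trivialising charts used in \eqref{det}, and by the remark just made it is invariant under permutations of $P,T_1,\dots,T_{2g-3}$. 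We already know that $\lambda_T$ does not depend on the point put in the first slot; swapping that slot with $T_1$ therefore gives $\lambda_{T_1+T_2+\dots+T_{2g-3}}=\lambda_{P+T_2+\dots+T_{2g-3}}$ for every generic $P$, and iterating over the remaining slots, together with the irreducibility of $\Sym^{2g-3}\calC$, shows that $\lambda_T$ is a fixed constant $\lambda$. The one delicate point --- giving an unambiguous meaning to the ``value'' of the section $\chi_{i,T}$ and to its alternating behaviour despite the chart choices in \eqref{det} --- causes no trouble, since any resulting ambiguity cancels in the scalar $\lambda_T$.

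\emph{The divisor $T=A_2+\dots+A_{g-1}+A_1+\dots+A_{g-1}$.} Here $\kappa=2(A_1+\dots+A_{g-1})$ gives $\sum_k u(T_k)-u(\kappa)=-u(A_1)$, so $f_{i,T}(P)=\vartheta[p_i]\bigl(u(P)-u(A_1)\bigr)$; evaluating at $P=A_1$ yields $f_{i,T}(A_1)=\vartheta[p_i](0)$, and squaring and dividing gives the claimed identity immediately (assuming, as throughout, that the even theta constant $\vartheta[p_2](0)$ does not vanish).

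\emph{The divisor $T'=A_2+\dots+A_{g-1}+B_1+\dots+B_{g-1}$.} A short computation with $u$ gives $f_{i,T'}(P)=\vartheta[p_i]\bigl(u(P)-u(A_1)+w\bigr)$, where $w:=u(D_{\overline{q}_1})-u(D_{q_1})$; since $D_{q_1}-D_{\overline{q}_1}$ and $D_{p_1}-D_{p_2}$ both represent the two-torsion point $q_1+\overline{q}_1=p_1+p_2$, the class $w$ is a lift to $W$ of the vector $v:=p_1+p_2\in V$. Applying \eqref{lem:actonchar} with this $w$, and using $p_1+v=p_2$, $p_2+v=p_1$, yields $f_{i,T'}(P)=\pm\,c_i(P)\cdot f_{3-i,T}(P)$, where $c_i(P)$ is the exponential prefactor of \eqref{lem:actonchar} for the characteristic $p_i$, with shift $w$ and argument $u(P)-u(A_1)$, and the signs come from reducing the shifted characteristics modulo $2$. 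Squaring removes those signs and leaves
$$\frac{f_{1,T'}(P)^2}{f_{2,T'}(P)^2}=\left(\frac{c_1(P)}{c_2(P)}\right)^{\!2}\cdot\frac{f_{2,T}(P)^2}{f_{1,T}(P)^2}.$$
Writing $p_i=\Ch{\eps_i}{\eps'_i}$ and $v=(\lambda_0,\mu_0)$, so that $\mu_0\equiv\eps_1+\eps_2$ and $\lambda_0\equiv\eps'_1+\eps'_2$ modulo $2$, the term $-\tfrac12\mu_0\,{}^t\!\bigl(u(P)-u(A_1)\bigr)$ is common to $c_1$ and $c_2$ and cancels in the ratio, so $\bigl(c_1(P)/c_2(P)\bigr)^2$ equals the constant $(-1)^{\mu_0\cdot(\eps'_1+\eps'_2)}=(-1)^{\lambda_0\cdot\mu_0}$. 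Since $a(q_0+p_1+p_2)=a(q_0)+q_0(p_1+p_2)=\lambda_0\cdot\mu_0$ by \eqref{arfproperty1}, \eqref{q0} and $a(q_0)=0$, this constant is $(-1)^{a(q_0+p_1+p_2)}$, which finishes the proof. The symmetry argument for the independence of $T$ is the main obstacle; the two evaluations are controlled bookkeeping with the automorphy factor of the theta function.
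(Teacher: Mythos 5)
Your proof is correct and follows exactly the route the paper intends: the paper's own ``proof'' is just the citation of \cite[Lemmas 3.1, 3.2]{NarRit2017}, and what you have written is precisely the direct generalization of those arguments --- the symmetry-of-slots argument for the independence of $T$ (symmetric $f_{i,T}$ versus alternating $\chi_{i,T}$, with signs cancelling in the ratio), the substitution $u(T)-u(\kappa)=-u(A_1)$ for the first identity, and the half-period translation law \eqref{lem:actonchar} with $w$ a lift of $p_1+p_2$, whose squared automorphy-factor ratio gives $(-1)^{\lambda_0\cdot\mu_0}=(-1)^{a(q_0+p_1+p_2)}$, for the second. No gaps; the sign and path ambiguities you flag are genuinely killed by squaring, as you say.
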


\begin{proof} The proof is a direct generalization of \cite[Lemma 3.1, Lemma 3.2]{NarRit2017}.
\end{proof}

From this we get that
$$\frac{f_{1,T}(A_1)^2 \cdot f_{2,T'}(A_1)^2}{f_{2,T}(A_1)^2 \cdot
  f_{1,T'}(A_1)^2} = (-1)^{a(q_0+p_1+p_2)} \cdot  \frac{\vartheta[p_1](0)^4}{\vartheta[p_2](0)^4}
= \frac{\chi_{1,T}(A_1)^2 \cdot
  \chi_{2,T'}(A_1)^2}{\chi_{2,T}(A_1)^2 \cdot \chi_{1,T'}(A_1)^2}.$$

We denote
$\sqrt{q}$ a (fixed) section (\emph{Abelsche Function}\label{AbelscheFun}) of the line bundle associate to $D_{q}$ for a quadratic form $q$. We write $\sqrt[P]{q}$ instead of $\sqrt{q}(P)$ to express all the following matrices relatively more decent. 
Let 
\begin{align*}
\Big\{ \{r_i,\overline{r}_i\} \mid i=1,\dots, g-1 \Big\} \text{ and } \Big\{ \{s_i,\overline{s}_i\} \mid i=1,\dots, g-1\Big\}
\end{align*}
be the sets of $g-1$ many distinct pairs such that 
\begin{align}\label{choiceSte1}
\begin{split}
p_1+q_1 = r_i+\overline{r}_i \text{ for } i=1,\dots , g-1, \\  
p_1+\overline{q}_1 = s_i+\overline{s}_i \text{ for } i=1,\dots , g-1.
\end{split}
\end{align}
\begin{remark}\label{enoughChoice}
Notice that $r_i, \overline{r}_i\in{\bf S}_{p_1+q_1}$ and $s_i, \overline{s}_i\in{\bf S}_{p_1+\overline{q}_1}$ for $i=1,\dots , g-1$. 
For each align in Equation~\eqref{choiceSte1}, we have $2^{g-2}(2^{g-1}-1)$ pairs of odd quadratic forms satisfying them, see Remark \ref{LabSyz}. 
\end{remark}

Assume that we can set the following expressions
\begin{align}\label{choiceBasis}
\begin{split}
t^{(j)}_1&=\sqrt{q_1r_j\overline{r}_j} \text{ for } j\in\{1,\dots, g-1\}, \text{ } t^{(j)}_1=\sqrt{\overline{q}_1s_j\overline{s}_j} \text{ for } j\in\{g,\dots, 2g-2\}, \\
t^{(j)}_2&=\sqrt{q_1s_j\overline{s}_j} \text{ for } j\in\{1,\dots, g-1\}, \text{ } t^{(j)}_2=\sqrt{\overline{q}_1r_j\overline{r}_j} \text{ for } j\in\{g,\dots, 2g-2\}. 
\end{split}
\end{align}

Once we make the choice in Equation~\eqref{choiceBasis}, the quotient 
$\chi_{1,T}(A_1)/\chi_{2,T}(A_1)$ take the indeterminate form $0/0$, so we need firstly to resolve this
ambiguity. 

\subsection{Resolving The Ambiguity}
We reset $T=T_2+\dots +T_{g-1}+A_1+\dots+A_{g-1}$. Note that $\sqrt[A_i]{q_1r_j\overline{r}_j}=0$ and $\sqrt[A_i]{q_1s_j\overline{s}_j}=0$, since $A_i$ is in the zeroes of the divisors $\Div\big({\sqrt{q_1r_j\overline{r}_j}}\ \big)$ and $\Div\big({\sqrt{q_1s_j\overline{s}_j}}\ \big)$ for $i=1,\dots, g-1$ and $j=1,\dots, g-1$. By using these identities, we have 
\begin{align*}
\chi_{1,T}(P) =g_{T}(P)
\begin{vmatrix} 
\sqrt[P]{r_1\overline{r}_1}  & \cdots & \sqrt[P]{r_{g-1}\overline{r}_{g-1}} \\
\sqrt[T_2]{ r_1\overline{r}_1}  & \cdots & \sqrt[T_2]{r_{g-1}\overline{r}_{g-1}} \\
\vdots & & \vdots \\
\sqrt[T_{g-1}]{r_1\overline{r}_1}  & \cdots & \sqrt[T_{g-1}]{ r_{g-1}\overline{r}_{g-1}}  
\end{vmatrix} 
 \begin{vmatrix} \sqrt[A_1]{ s_1\overline{s}_1}  & \cdots & \sqrt[A_1]{s_{g-1}\overline{s}_{g-1}} \\ 
\vdots & & \vdots  \\ \\
 \sqrt[A_{g-1}]{ s_1\overline{s}_1}  & \cdots & \sqrt[A_{g-1}]{s_{g-1}\overline{s}_{g-1}} \\
\end{vmatrix},
\end{align*} 
and 
\begin{align*}
\chi_{2,T}(P)=g_{T}(P)
 \begin{vmatrix} \sqrt[P]{ s_1\overline{s}_1}  & \cdots & \sqrt[P]{ s_{g-1}\overline{s}_{g-1}} \\
\sqrt[T_2]{s_1\overline{s}_1}  & \cdots & \sqrt[T_2]{ s_{g-1}\overline{s}_{g-1}} \\
\vdots & & \vdots \\
\sqrt[T_{g-1}]{ s_1\overline{s}_1}  & \cdots & \sqrt[T_{g-1}]{ s_{g-1}\overline{s}_{g-1}}  
\end{vmatrix}
 \begin{vmatrix} \sqrt[A_1]{ r_1\overline{r}_1}  & \cdots & \sqrt[A_1]{r_{g-1}\overline{r}_{g-1}} \\
\vdots & & \vdots \\ \\
\sqrt[A_{g-1}]{ r_1\overline{r}_1}  & \cdots & \sqrt[A_{g-1}]{ r_{g-1}\overline{r}_{g-1}} 
\end{vmatrix}, \\
\end{align*} 
where $g_{T}(P)=\sqrt[P]{q_1}\sqrt[T_2]{q_1}\cdots \sqrt[T_{g-1}]{q_1}\sqrt[A_1]{\overline{q}_1}\cdots\sqrt[A_{g-1}]{\overline{q}_1}$. Now, let $P=A_1$ and $T_i=A_i$ for $i= 2, \dots, g-1$. So we have $\frac{\chi_{1,T}(A_1)}{\chi_{2,T}(A_1)}=1$. 

Hence we have
\begin{align}\label{sadelesme}
(-1)^{a(q_0+p_1+p_2)} \cdot  \frac{\vartheta[p_1](0)^4}{\vartheta[p_2](0)^4}
= \frac{
  \chi_{2,T'}(A_1)^2}{ \chi_{1,T'}(A_1)^2}.
\end{align}

\subsection{Rewriting The Quotient}
In order to have $(-1)^{a(q_0+p_1+p_2)} \frac{\vartheta[p_1](0)^4}{\vartheta[p_2](0)^4}$, we need to compute $\frac{\chi_{2,T'}(A_1)^2}{ \chi_{1,T'}(A_1)^2}$ thanks to Equation~\eqref{sadelesme}. Notice that $\chi_{1,T'}^2, \chi_{2,T'}^2$ are sections of the same line bundle corresponding to $3\kappa$. So their quotient is a rational function on the curve. All the following computations in this section are carried out to find this rational function.

Now, we suppose that $T'=A_2+\dots +A_{g-1}+ B_1+\dots +B_{g-1}$. 

Therefore, we have

\begin{align*}
\chi_{1,T'}(A_1) =  \begin{vmatrix} \sqrt[A_1]{q_1r_1\overline{r}_1}  & \cdots & \sqrt[A_1]{q_1r_{g-1}\overline{r}_{g-1}} & \sqrt[A_1]{\overline{q}_1s_1\overline{s}_1}  & \cdots & \sqrt[A_1]{\overline{q}_1s_{g-1}\overline{s}_{g-1}} \\
\vdots & & \vdots \\
\sqrt[A_{g-1}]{q_1r_1\overline{r}_1}  & \cdots & \sqrt[A_{g-1}]{q_1r_{g-1}\overline{r}_{g-1}} & \sqrt[A_{g-1}]{\overline{q}_1s_1\overline{s}_1}  & \cdots & \sqrt[A_{g-1}]{\overline{q}_1s_{g-1}\overline{s}_{g-1}} \\
\sqrt[B_{1}]{q_1r_1\overline{r}_1}  & \cdots & \sqrt[B_1]{q_1r_{g-1}\overline{r}_{g-1}} & \sqrt[B_1]{\overline{q}_1s_1\overline{s}_1}  & \cdots & \sqrt[B_1]{\overline{q}_1s_{g-1}\overline{s}_{g-1}} \\
\vdots & & \vdots \\
\sqrt[B_{g-1}]{q_1r_1\overline{r}_1}  & \cdots & \sqrt[B_{g-1}]{q_1r_{g-1}\overline{r}_{g-1}} & \sqrt[B_{g-1}]{\overline{q}_1s_1\overline{s}_1}  & \cdots & \sqrt[B_{g-1}]{\overline{q}_1s_{g-1}\overline{s}_{g-1}} \\
\end{vmatrix}. 
\end{align*} \\

Note that $\sqrt[A_i]{q_1r_{j}\overline{r}_{j}}=0$, $\sqrt[A_i]{q_1s_{j}\overline{s}_{j}}=0$ and $\sqrt[B_i]{\overline{q}_1r_{j}\overline{r}_{j}}=0$, $\sqrt[B_i]{\overline{q}_1s_{j}\overline{s}_{j}}=0$ for $i=1,\dots, g-1$, $j=1,\dots , g-1$. Then we have

\begin{align*}
\chi_{1,T'}(A_1)& = \begin{vmatrix} 0  & \cdots & 0 & \sqrt[A_1]{\overline{q}_1s_1\overline{s}_1}  & \cdots & \sqrt[A_1]{\overline{q}_1s_{g-1}\overline{s}_{g-1}} \\
\vdots & & \vdots \\
0  & \cdots & 0  & \sqrt[A_{g-1}]{\overline{q}_1s_1\overline{s}_1}  & \cdots & \sqrt[A_{g-1}]{\overline{q}_1s_{g-1}\overline{s}_{g-1}} \\
\sqrt[B_{1}]{q_1r_1\overline{r}_1}  & \cdots & \sqrt[B_1]{q_1r_{g-1}\overline{r}_{g-1}} & 0  & \cdots & 0 \\
\vdots & & \vdots \\
\sqrt[B_{g-1}]{q_1r_1\overline{r}_1}  & \cdots & \sqrt[B_{g-1}]{q_1r_{g-1}\overline{r}_{g-1}} & 0  & \cdots & 0 \\
\end{vmatrix} \\ \\
&=c_{T'}
{\begin{vmatrix} 
\sqrt[B_1]{r_1\overline{r}_1}  & \cdots & \sqrt[B_1]{r_{g-1}\overline{r}_{g-1}} \\
\vdots & & \vdots \\
\sqrt[B_{g-1}]{r_1\overline{r}_1}  & \cdots & \sqrt[B_{g-1}]{ r_{g-1}\overline{r}_{g-1}}  
\end{vmatrix}}
 {\begin{vmatrix} \sqrt[A_1]{ s_1\overline{s}_1}  & \cdots & \sqrt[A_1]{s_{g-1}\overline{s}_{g-1}} \\ 
\vdots & & \vdots  \\ \
 \sqrt[A_{g-1}]{ s_1\overline{s}_1}  & \cdots & \sqrt[A_{g-1}]{s_{g-1}\overline{s}_{g-1}} 
\end{vmatrix}},
\end{align*} 
where  $c_{T'}=\sqrt[A_1]{\overline{q}_1}\cdots \sqrt[A_{g-1}]{\overline{q}_1}\sqrt[B_1]{{q}_1}\cdots\sqrt[A_{g-1}]{{q}_1}.$
After making similar computations for $\chi_{2,T'}$, we have then the following quotient

\begin{align*}
\frac{\chi_{2,T'}(A_1)}{\chi_{1,T'}(A_1)} 
&=\frac
{
\begin{vmatrix} 
\sqrt[B_1]{s_1\overline{s}_1}  & \cdots & \sqrt[B_1]{s_{g-1}\overline{s}_{g-1}} \\
\vdots & & \vdots \\
\sqrt[B_{g-1}]{s_1\overline{s}_1}  & \cdots & \sqrt[B_{g-1}]{ s_{g-1}\overline{s}_{g-1}}  
\end{vmatrix}
 \begin{vmatrix} \sqrt[A_1]{ r_1\overline{r}_1}  & \cdots & \sqrt[A_1]{r_{g-1}\overline{r}_{g-1}} \\ 
\vdots & & \vdots  \\ \
 \sqrt[A_{g-1}]{ r_1\overline{r}_1}  & \cdots & \sqrt[A_{g-1}]{r_{g-1}\overline{r}_{g-1}}
\end{vmatrix}}
{\begin{vmatrix} 
\sqrt[B_1]{r_1\overline{r}_1}  & \cdots & \sqrt[B_1]{r_{g-1}\overline{r}_{g-1}} \\
\vdots & & \vdots \\
\sqrt[B_{g-1}]{r_1\overline{r}_1}  & \cdots & \sqrt[B_{g-1}]{ r_{g-1}\overline{r}_{g-1}}  
\end{vmatrix}
 \begin{vmatrix} \sqrt[A_1]{ s_1\overline{s}_1}  & \cdots & \sqrt[A_1]{s_{g-1}\overline{s}_{g-1}} \\ 
\vdots & & \vdots  \\ \
 \sqrt[A_{g-1}]{ s_1\overline{s}_1}  & \cdots & \sqrt[A_{g-1}]{s_{g-1}\overline{s}_{g-1}} \\ 
\end{vmatrix}}.
\end{align*} 

In the following part, we reorganize the quotient in order to express it with some elementary functions. For that reason, we complete all the pairs of quadratic forms appearing in the matrices above to syzygetic tetrads as follows.

Let  $\{r_g,\overline{r}_g\}$ and $\{s_g,\overline{s}_g\}$ be any other two pairs of quadratic forms from ${\bf S}_{p_1+q_1}$ and ${\bf S}_{p_1+\overline{q}_1}$ such that $r_g+\overline{r}_g=p_1+q_1$ and $s_g+\overline{s}_g=p_1+\overline{q}_1$ different than any  $\{r_i,\overline{r}_i\}$ and $\{s_i,\overline{s}_i\}$ for $i=1,\dots , g-1$ respectively. 
Then we divide each row of the matrices by one of $\sqrt[T'_i]{r_g\overline{r}_g}$ and $\sqrt[T'_i]{s_g\overline{s}_g}$ with a suitable $T'_i$ among $A_1,\dots , A_{g-1}, B_1,\dots , B_{g-1}$ for each $i=2,\dots, 2g-3$. Hence we have

\begin{align}\label{eq:matkay1}
&\frac{\chi_{2,T'}(A_1)}{\chi_{1,T'}(A_1)} 
&=\frac
{d_{2}
\begin{vmatrix} 
\frac{\sqrt[B_1]{s_1\overline{s}_1}}{\sqrt[B_1]{s_g\overline{s}_g}}  & \cdots & \frac{\sqrt[B_1]{s_{g-1}\overline{s}_{g-1}}}{\sqrt[B_1]{s_g\overline{s}_g}} \\
\vdots & & \vdots \\
\frac{\sqrt[B_{g-1}]{s_1\overline{s}_1}}{\sqrt[B_{g-1}]{s_g\overline{s}_g}}   & \cdots & \frac{\sqrt[B_{g-1}]{s_{g-1}\overline{s}_{g-1}}}{\sqrt[B_{g-1}]{s_g\overline{s}_g}}
\end{vmatrix}
\begin{vmatrix} \frac{\sqrt[A_1]{ r_1\overline{r}_1}}{\sqrt[A_1]{ r_g\overline{r}_g}}  & \cdots & \frac{\sqrt[A_{1}]{ r_{g-1}\overline{r}_{g-1}}}{\sqrt[A_{1}]{ r_g\overline{r}_g}} \\ 
\vdots & & \vdots  \\ 
\frac{\sqrt[A_{g-1}]{ r_1\overline{r}_1}}{\sqrt[A_{g-1}]{ r_g\overline{r}_g}}  & \cdots & \frac{\sqrt[A_{g-1}]{ r_{g-1}\overline{r}_{g-1}}}{\sqrt[A_{g-1}]{ r_g\overline{r}_g}}
\end{vmatrix}}
{d_{1}\begin{vmatrix} \frac{\sqrt[B_1]{ r_1\overline{r}_1}}{\sqrt[B_1]{ r_g\overline{r}_g}}  & \cdots & \frac{\sqrt[B_{1}]{ r_{g-1}\overline{r}_{g-1}}}{\sqrt[B_{1}]{ r_g\overline{r}_g}} \\ 
\vdots & & \vdots  \\ 
\frac{\sqrt[B_{g-1}]{ r_1\overline{r}_1}}{\sqrt[B_{g-1}]{ r_g\overline{r}_g}}  & \cdots & \frac{\sqrt[B_{g-1}]{ r_{g-1}\overline{r}_{g-1}}}{\sqrt[B_{g-1}]{ r_g\overline{r}_g}}
\end{vmatrix}
 \begin{vmatrix} 
\frac{\sqrt[A_1]{s_1\overline{s}_1}}{\sqrt[A_1]{s_g\overline{s}_g}}  & \cdots & \frac{\sqrt[A_1]{s_{g-1}\overline{s}_{g-1}}}{\sqrt[A_1]{s_g\overline{s}_g}} \\
\vdots & & \vdots \\
\frac{\sqrt[A_{g-1}]{s_1\overline{s}_1}}{\sqrt[A_{g-1}]{s_g\overline{s}_g}}   & \cdots & \frac{\sqrt[A_{g-1}]{s_{g-1}\overline{s}_{g-1}}}{\sqrt[A_{g-1}]{s_g\overline{s}_g}}
\end{vmatrix}}, 
\end{align}

where  $d_{1}=\sqrt[B_1]{r_g\overline{r}_g}\cdots\sqrt[B_{g-1}]{r_g\overline{r}_g}\sqrt[A_1]{s_g\overline{s}_g}\cdots\sqrt[A_{g-1}]{s_g\overline{s}_g}$ and 
\\ \\$d_{2}=\sqrt[B_1]{s_g\overline{s}_g}\cdots \sqrt[B_{g-1}]{s_g\overline{s}_g}\sqrt[A_1]{r_g\overline{r}_g}\cdots\sqrt[A_{g-1}]{r_g\overline{r}_g}$.

\subsubsection{Computing the Quadrics}\label{compquad}

Now, all the entries of the four matrices in Equation~\eqref{eq:matkay1} are formed by a syzygetic tetrad of quadratic forms. Without loss of generality, we show how to obtain an elementary function by using such a tetrad only on the entries with $r_i, \overline{r}_i$'s.  
Among $2^{g-2}(2^{g-1}-1)-g$ pairs, we consider one more pair $\{r_{g+1},\overline{r}_{g+1}\}$ from ${\bf S}_{p_1+q_1}$ satisfying $r_{g+1}+\overline{r}_{g+1}=p_1+q_1$ 
different than any pairs $\{r_i,\overline{r}_i\}$ for $i=1,\dots , g$.
 Recall that the corresponding odd theta characteristic divisors to $r_i,\overline{r}_i$ are denoted by $D_{r_i}, D_{\overline{r}_i}$. 
It follows from \cite[Chapter 8]{Dol2012} that  
$D_{r_i}+D_{\overline{r}_i}+D_{r_{g+1}}+D_{\overline{r}_{g+1}}$ is cut out by a quadric $\calQ_{i}^r$ in $\PP^{g-1}.$ We can compute the quadric by computing the linear system of quadrics that pass through the points in the support of the aforementioned divisors. 
Denote the quadric $\calQ_{i}^s$ which exists for the tetrads among $s_i,\overline{s}_i$'s.

So, we have the following equalities between the following divisors on $\calC$
\begin{align*}
\Div\left(\frac{\sqrt{ r_i\overline{r}_i}}{\sqrt{ r_g\overline{r}_g}}\right)=\Div\left(\frac{\calQ_{i}^r}{\calQ_{g}^r}\right) \text{ \quad and \quad } \Div\left(\frac{\sqrt{ s_i\overline{s}_i}}{\sqrt{ s_g\overline{s}_g}}\right)=\Div\left(\frac{\calQ_{i}^s}{\calQ_{g}^s}\right) .
\end{align*}
It implies that there are constants $c_{r,ig},c_{s,ig}\in \CC$ such that 
\begin{align}\label{consquad}
\frac{\sqrt{ r_i\overline{r}_i}}{\sqrt{ r_g\overline{r}_g}}=c_{r,ig}\frac{\calQ_{i}^r}{\calQ_{g}^r} \text{ \quad and \quad } \frac{\sqrt{ r_i\overline{r}_i}}{\sqrt{ s_g\overline{s}_g}}=c_{s,ig}\frac{\calQ_{i}^s}{\calQ_{g}^s}. 
\end{align}

In the light of the computations above, we rewrite ${\chi_{2,T'}(A_1)}/{\chi_{1,T'}(A_1)}$ in terms of the corresponding quadrics and take the square of the quotient. Note that the constants in Equation~\eqref{consquad} appear in the numerator and denominator of the quotient ${\chi_{2,T'}(A_1)}/{\chi_{1,T'}(A_1)}$ in the same way, so they are cancelled out. We do not include them in the quotient $\left(\frac{\chi_{2,T'}(A_1)}{\chi_{1,T'}(A_1)}\right)^2$.  Therefore, Equation~\eqref{sadelesme} implies Theorem~\ref{mainTheorem}.

\begin{remark}\label{QuaSte} With respect to the proof, we need not only to compute the Steiner sets ${\bf S}_{p_1+q_1}$ and ${\bf S}_{p_1+\overline{q}_1}$ in terms of the multitangents but also label them with respect to a symplectic basis. Namely, we need to compute a complete 2-level structure of the curve. We refer to \cite[Algorithm 3.2]{CelKulRenSay2018} to obtain all the Steiner sets from the set of multitangents, which uses the geometric characterization of being syzygetic as we use in Section~\ref{compquad}. One can compute the multitangents with some algebraic geometric methods involving Gröbner bases, resultants etc. For the latter, we remind Corollary~\ref{SymSte}. Nevertheless, we need to point out that neither computing the Steiner sets which is equivalent to computing of the quadrics described in Section~\ref{compquad} nor computing the multitangents is generally a low-cost task from the computational perspective.   
\end{remark}  

Thanks to Theorem~\ref{mainTheorem}, we establish Algorithm~\ref{AlgTheCon}. We firstly give a preliminary algorithm in order to prepare the contents of Algorithm~\ref{AlgTheCon}. \\

   \begin{algorithm}[H]
\caption{Preliminary Computation}\label{PreAlgo}
\begin{algorithmic}[1]
  \REQUIRE 
 \text{ } \\
$\calC$, canonical model of the curve in $\PP^{g-1}$. \\
 \ENSURE $\mathfrak{S}$, set of all Steiner sets in which all the multitangents labeled with appropriate characteristics with respect to a symplectic basis.
\STATE Compute the multitangents. 
\STATE Compute the Steiner Sets. 
\STATE Label all the multitangents as described in Corollary~\ref{SymSte}.  
\RETURN $\mathfrak{S}$.
    \end{algorithmic}
\end{algorithm}

   \begin{algorithm}[H]
\caption{Theta Constants}\label{AlgTheCon}
\begin{algorithmic}[1]
  \REQUIRE 
 \text{ } \\
\begin{itemize}
\item $\calC$, canonical model of the curve in $\PP^{g-1}$. \\
\item Even characteristics $p_1,p_2\in \ZZ_2^g\oplus \ZZ_2^g$ with respect to the labeling obtained by Algorithm~\ref{PreAlgo}.
\end{itemize}
  \ENSURE $\left(\frac{\vartheta[p_1](\tau)}{\vartheta[p_2](\tau)}\right)^4$.
\STATE Set the Steiner sets ${\bf S}_{p_1+q_1}$ and ${\bf S}_{p_1+\overline{q}_1}$ with $q_1+\overline{q}_1=p_1+p_2$. 
\STATE \label{stepp} Set randomly $g+1$ many $r_i,\overline{r}_i$ and $s_i,\overline{s}_i$ in ${\bf S}_{p_1+q_1}$ and ${\bf S}_{p_1+\overline{q}_1}$ respectively for $i=1,\dots , g+1$. 
\STATE Call the multitangents $\beta_{q_1},\beta_{\overline{q}_1}$ and compute the contact points $A_1,\dots , A_{g-1}$ and $B_1,\dots , B_{g-1}$ of $\beta_{q_1}$ and $\beta_{\overline{q}_1}$ with $\calC$ respectively.
\STATE \label{steppp} Call the multitangents $\beta_{r_i}, \beta_{\overline{r}_i}$ and  $\beta_{s_i}, \beta_{\overline{s}_i}$ for $i=1,\dots , g+1$ and compute their contact points with $\calC$.
\STATE Now compute the quadrics $\calQ^r_{i}, \calQ^r_{g}$ and $\calQ^s_{i}, \calQ^s_{g}$ for $i=1,\dots ,g-1$ via the tangency points computed in Step~\ref{steppp}. 
\STATE Check whether $\calQ^r_{i}$'s and $\calQ^s_{i}$'s are linearly independent for $i=1,\dots , g-1$ separately. If one of them fails to be linearly independent then restart the procedure from Step~\ref{stepp}.
\STATE\label{Step} Compute $d_1$ and $d_2$.
\RETURN $\left(\frac{\chi_{2,T'}(A_1)}{\chi_{1,T'}(A_1)}\right)^2$.
    \end{algorithmic}
\end{algorithm} 
\text{ } \\

The algorithm has been implemented in MAGMA \cite{magma} and available on \url{https://turkuozlum.wixsite.com/tocj}. In the code file, there are supplemental codes computing a complete 2-level structure of $\calC$ when $\calC$ is a non-hyperelliptic curve of genus 4 lying on a quadric cone. 

\begin{remark} Step~\ref{Step} in Algorithm~\ref{AlgTheCon} has a conditional statement. This situation arises from the choices in Equation~\eqref{choiceBasis}. Indeed, these choices can rarely fail to be the bases of $\mathcal{L}(D_{p_1}+\kappa)$ or $\mathcal{L}(D_{p_2}+\kappa)$. However we can control it computationally by checking the linear dependence of the quadrics forming the quotient  $\left(\frac{\chi_{2,S'}(A_1)}{\chi_{1,S'}(A_1)}\right)^2$. We actually anticipate the following conjecture holds. 
\end{remark}

\begin{conjecture}\label{Que}
For a fixed $v\in \Pic(\calC)[2]$, the following map
\begin{align*}
\bigoplus_{\substack{D, D+v }}\calL(D)\otimes \calL(D+v) \longrightarrow \calL(\kappa+v)
\end{align*}
surjective when $D, D+v$ run through all the effective and odd theta characteristic divisors.
\end{conjecture}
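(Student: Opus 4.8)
The plan is to reinterpret the map geometrically and reduce surjectivity to a completeness statement for a linear system on the canonical model. First I would restrict to a general curve $\calC$, so that every odd theta characteristic divisor $D$ has $\ell(D)=1$; then $\calL(D)$ is the line spanned by a section $\sqrt{D}$ with zero divisor $D$, and the summand $\calL(D)\otimes\calL(D+v)$ contributes exactly the line through $\sqrt{D}\,\sqrt{D+v}$, a section of $\calL(\kappa+v)$ with zero divisor $D+(D+v)$. Since the cokernel dimension of a multiplication map is upper semicontinuous on $\mathcal{M}_g$, it suffices to prove surjectivity for one curve in each component, so one is free to specialize. For $v=0$ the statement says that the multitangent hyperplanes $H_D$ span $H^0(\PP^{g-1},\calO(1))$, i.e. that no point of $\PP^{g-1}$ lies on every multitangent; for $v\neq 0$, where $\dim\calL(\kappa+v)=g-1$, it says that the Prym-canonical divisors $D+(D+v)$, with $\{D,D+v\}$ the pairs in $\mathbf{S}_v$ (there are $2^{g-3}(2^{g-1}-1)$ of them by Remark~\ref{LabSyz}), span the complete linear system $|\kappa+v|$ of projective dimension $g-2$.

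Next I would translate the products into quadrics, exactly as in Section~\ref{compquad}. Fix one pair $\{D_0,D_0+v\}$ in $\mathbf{S}_v$. By projective normality of the canonical curve, every effective divisor in $|2\kappa|$ is cut out on $\calC$ by a quadric of $\PP^{g-1}$; applying this to $\bigl(D+(D+v)\bigr)+\bigl(D_0+(D_0+v)\bigr)$ and to $2\bigl(D_0+(D_0+v)\bigr)$ yields quadrics $\calQ_D$ and $\calQ_0$, all passing through the $2g-2$ points of $\Supp\bigl(D_0+(D_0+v)\bigr)$, with $\sqrt{D}\,\sqrt{D+v}/\bigl(\sqrt{D_0}\,\sqrt{D_0+v}\bigr)=\calQ_D|_\calC/\calQ_0|_\calC$. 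Hence the span of the sections $\sqrt{D}\,\sqrt{D+v}$ equals the span of the $\calQ_D|_\calC$ multiplied by the fixed rational function $\sqrt{D_0}\,\sqrt{D_0+v}/\calQ_0|_\calC$, and Conjecture~\ref{Que} becomes a statement about the canonical model: that those $2g-2$ points impose independent conditions on quadrics, together with the fact that the special quadrics $\calQ_D$ already span, modulo the ideal of quadrics through $\calC$, the full space of quadrics through those points. The first part follows from standard results on points of a canonical curve provided $\Supp\bigl(D_0+(D_0+v)\bigr)$ is not a very special configuration (which one checks for a general $\calC$, excluding the trigonal and bielliptic loci by hand); the second part is the genuinely new content.

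For that second part I would try two routes. The representation-theoretic one: the span of the $\sqrt{D}\,\sqrt{D+v}$ inside $\calL(\kappa+v)$ is invariant under the subgroup of the monodromy group of $\calC$ (the symplectic group of $\Jac(\calC)[2]$ over $\FF_2$, acting via variation in moduli) that fixes $v$; this subgroup acts on $\mathbf{S}_v$ and, by Remark~\ref{LabSyz}, transitively on its pairs, so one should identify $\calL(\kappa+v)$ with a known $(g-1)$-dimensional representation of $\Stab(v)$ and rule out that all the $\sqrt{D}\,\sqrt{D+v}$ factor through a proper invariant subspace. The degeneration route: specialize $\calC$ to a suitable stable curve --- an irreducible nodal curve, or a chain of elliptic components --- whose theta characteristics, Prym structure and multiplication maps are controlled combinatorially by spin structures on the dual graph, verify the spanning there by a finite computation, and conclude by semicontinuity. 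The main obstacle I expect is precisely the completeness of the subsystem: base-point-freeness of the span of the $\calQ_D$, equivalently that no point of $\calC$ lies on all the relevant multitangents, is comparatively easy by counting how many odd theta characteristics contain a fixed point; but excluding the possibility that all the divisors $D+(D+v)$ lie in a proper linear subsystem of $|\kappa+v|$ requires real control of the Prym-canonical geometry, and this is exactly where the rare failure anticipated in the remark preceding the conjecture could occur.
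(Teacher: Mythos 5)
First, be aware that the statement you are addressing is stated in the paper as Conjecture~\ref{Que}: the paper offers no proof and says only that the author \emph{anticipates} it holds. So there is no argument of the paper's to compare yours against, and your text has to be judged on whether it settles the question. It does not: what you have written is a reduction plus a research plan, not a proof. The reduction itself is sound and consistent with the paper's own machinery in Section~\ref{compquad} --- identifying the image of each summand with the line through $\sqrt{D}\,\sqrt{D+v}$, translating the products into quadrics through the support of a fixed reference pair, and recasting surjectivity as the statement that the divisors $D+(D+v)$, for $\{D,D+v\}$ running over the pairs of the Steiner set ${\bf S}_v$, span the complete linear system $|\kappa+v|$. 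But that spanning statement \emph{is} the conjecture; you say yourself that it ``is the genuinely new content,'' and neither of the two routes you sketch (the action of $\Stab(v)\subset\Sp(2g,\FF_2)$ via monodromy, or degeneration to stable curves plus semicontinuity) is actually carried out. Deferring the key step means nothing has been proved.

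Two further concrete problems. (i) Your semicontinuity step, even if made rigorous, only yields the statement for a \emph{general} curve: the locus where a multiplication map fails to be surjective is closed, so checking one special fibre gives an open dense subset of $\mathcal{M}_g$, not all non-hyperelliptic curves --- and the curves of actual interest in Section~\ref{DelPezzoSubsections} (space sextics on a quadric cone, which carry a vanishing even theta-null) are special. Worse, the source $\bigoplus\calL(D)\otimes\calL(D+v)$ does not have locally constant rank over moduli, since both the set of effective odd theta characteristics and the dimensions $\ell(D)$ can jump, so the semicontinuity you invoke is not automatic and needs an argument. (ii) Check your count of pairs: for $g=3$ a Steiner set consists of $12$ odd theta characteristics forming $6$ pairs, i.e.\ $2^{g-2}(2^{g-1}-1)$ pairs (as in Remark~\ref{enoughChoice}), not $2^{g-3}(2^{g-1}-1)$. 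This slip is harmless for the structure of your reduction, but the exact orbit sizes matter if you pursue the representation-theoretic route.
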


\section{Applying the Algorithm in genus 4}\label{DelPezzoSubsections}

In this section, we apply the algorithm on an example of a curve of genus 4. We avoid the case of genus 3 to use for the application since Weber's formula given in Equation~\eqref{WeberEng} does the job. However, 
we shall note that 
our formula and Weber's formula coincide on all the examples we have tried as we have expected, because we obtain our formula from Weber's formula immediately after several algebraic computations applied to it.

Now assume that the genus of $\calC$ is 4. The canonical model is given by the complete intersection of a quadric $\calQ$ and a cubic surface $\mathcal{R}$ in $\PP^3$. We call such a curve a \emph{space sextic}. A space sextic lies on either a smooth quadric or a quadric cone.
 For the application, we focus on the ones lying on a quadric cone since they provide a relatively efficient way to obtain a complete 2-level structure.

\subsection{Space Sextics Lying on a Singular Quadric}

Assume that $\calQ$ is a quadric cone. In this case, $\calC$ has a vanishing even theta constant. The characteristic of this constant is corresponding to the unique effective even theta characteristic divisor for which the dimension of the Riemann-Roch space is 2. The corresponding extrinsic geometric object is a one dimensional family of tritangents. Each plane in this family passes through the node of $\calQ$. In the meantime, for each effective odd theta characteristic divisor, there is a unique tritangent. There are 120 of them.  

\subsubsection{The Canonical Model and Tritangents} Such a space sextic is constructed from a del Pezzo surface of degree 1 \cite[Proposition 5.1]{CelKulRenSay2018}. Its geometry is more transparent compared to the other case thanks to the geometric structure of the del Pezzo surfaces. 

Let $\calS$ be a del Pezzo surface of degree 1. The surface $\calS$ is isomorphic to the blow up of $\PP^2$ at 8 points in general position, say $P_1,\dots, P_8$. We denote $\mathcal{P}=\{P_1,\dots, P_8\}$. 
We treat $\calS$ as the blow up. The anticanonical model for $\calS$ is a sextic hypersurface in $\PP(1:1:2:3)$. We can compute this model by starting with $P_1,\dots , P_8$.  If we consider the projection $\pi : \PP(1:1:2:3) \rightarrow \PP(1:1:2)$ then it is generically 2-1 branched along a curve $\calC'$. If we embed $\PP(1:1:2)$ as a singular quadric surface in $\PP^3$ by $\phi$ then the image $\calC:=\phi(\calC)$ under $\phi$ is a curve of genus 4. Let $\psi$ be the blow up map then we have the following diagram. 

\begin{figure}[h]
  \centering
  \begin{tikzpicture}
    \node (P2) {$\PP^2$};
    \node[anchor=base west,xshift=1cm] (P1123) at (P2.base east) {$\PP(1\!:\!1\!:\!2\!:\!3)$};
    \node[anchor=base west,yshift=-25mm] (P112) at (P1123.base west) {$\PP(1\!:\!1\!:\!2)$};
    \node[anchor=base west,yshift=-25mm] (P3) at (P112.base west) {$\PP^3$};
    \draw[->] (P2) -- node[above] {$\psi$} (P1123);
    \draw[->] ($(P1123.south west)+(0.25,0)$) -- node[left] {$\pi$} node[right]
    {
      \begin{tikzpicture}[every node/.style={font=\scriptsize}]
        \node (P1123elem) {$(x\!:\!y\!:\!z\!:\!w)$};
        \draw[|->] ($(P1123elem.south west)+(0.75,0)$) -- ++(0,-0.5);
        \node[anchor=base west,yshift=-10mm] at (P1123elem.base west) {$(s\!:\!t\!:\!w)$};
      \end{tikzpicture}
    }
    ++(0,-1.7);
    \draw[->] ($(P112.south west)+(0.25,0)$) -- node[left] {$\phi$} node[right]
    {
      \begin{tikzpicture}[every node/.style={font=\scriptsize}]
        \node (P112elem) {$(x\!:\!y\!:\!z)$};
        \draw[|->] ($(P112elem.south west)+(0.75,0)$) -- ++(0,-0.5);
        \node[anchor=base west,yshift=-10mm] at (P112elem.base west) {$(x^2\!:\!xy\!:\!y^2\!:\!z)$};
      \end{tikzpicture}
    }
    ++(0,-1.7);
    \node[anchor=base west,xshift=10mm] (X) at (P1123.base east) {$\calS:=\overline{\psi(\mathbb P^2)}$};
    \draw[draw opacity=0] (X) -- node[sloped] (supsetTop) {$\supset$} (P1123);
    \node[anchor=base west,yshift=-25mm] (Cprime) at (X.base west) {$\calC':=\text{BranchCurve}(\pi_\calS)$};
    \node[anchor=base,yshift=-25mm] (supsetMid) at (supsetTop.base) {$\supset$};
    \node[anchor=base west,yshift=-25mm] (C) at (Cprime.base west) {$\calC$};
    \node[anchor=base,yshift=-25mm] (supsetLow) at (supsetMid.base) {$\supset$};
    \draw[->] ($(Cprime.south west)+(0.25,-0.1)$) -- ++(0,-1.6);
  \end{tikzpicture}
  \caption{The del Pezzo surface $\calS$ of degree $1$ and the branch curve $\mathcal{C}'$.}
  \label{fig:diagram1}
\end{figure}

This disposition enables us to compute the defining equations of $\calC$ and the equations of the tritangents starting with $\mathcal{P}$. 
Indeed, the map between $\calS$ and $\calC$ gives a 2-1 correspondence between the exceptional curves on $\calS$ and tritangents of $\calC$. The exceptional curves are the images of the following pairs in $\PP^2$ denoted by (0,6), (1,5), (2,4), (3,3) respectively. 

\begin{enumerate}
	\item[(0,6)]
	The point $P_i$ and the sextic vanishing triply at $P_i$ and doubly at the other seven points.
	\item[(1,5)]
	The line through $\{P_i, P_j\}$ and the quintic vanishing at all eight points and doubly at the six points in $\mathcal{P} \backslash \{P_i, P_j\}$.
	\item[(2,4)]
	The conic through $\mathcal P\setminus \{P_i,P_j,P_k\}$ and the quartic vanishing at $\mathcal{P}$ and doubly at $P_i,P_j,P_k$.
	\item[(3,3)]
	The cubic vanishing doubly at $P_i$, non-vanishing at  $P_j$, and vanishing singly at $\mathcal{P} \backslash \{P_i,P_j\}$ and the cubic vanishing doubly at $P_j$, non-vanishing at $P_i$, and vanishing singly at $\mathcal{P} \backslash \{P_i,P_j\}$.
\end{enumerate}

For a detailed explanation, we refer \cite{CelKulRenSay2018}. 

Moreover, we do not have only the equations but also a complete 2-level structure of $\calC$ as follows. 

\subsubsection{Labeling} The configuration of exceptional curves on $\calS$ enables us to label the tritangents with appropriate characteristics in a coordinate-free way. In the following part, we show how to obtain a labeling which is explained in Section~\ref{labelling}

Let $E_1, \dots , E_8$ be the exceptional divisors lying above $P_1,\dots , P_8$ under the blow-up map and $\kappa_\calS$ be the canonical divisor of $\calS$. Suppose that $\rho : \Pic \calS \rightarrow \Pic \calC$ be the natural restriction homomorphism. Let $\langle \cdot , \cdot \rangle $ be the Weil pairing on $\Pic(\calC)[2]$. Set $v_i:= \rho(E_i+\kappa_\calS) $ and $v_9=\sum_{i=1}^8$ then $\{v_1,\dots ,v_9\}$ is a fundamental set of $\Pic(\calC)[2]$ i.e. $\langle \rho(E_i+\kappa_\calS) , \rho (E_j , \kappa_\calS) \rangle = 1$ if $i\neq j$ which follows from \cite[Theorem 2.1]{Zarhin2008}. Now, we consider $\rho(-\kappa_S)$. It is an even theta characteristic divisor by \cite[Lemma 2.4(ii)]{Zarhin2008}. We take the quadratic form $q:=q_{\rho(-\kappa_\calS)}$ corresponding to the theta characteristic divisor $\rho(-\kappa_\calS)$. We define $q_i:=q+v_i+v_9$ for $i=1,\dots , 9$. It follows from Proposition~\ref{prop:Aron} that the set $\{q_1,\dots , q_9\}$ forms an Aronhold basis of $\Pic(\calC)[2]\bigsqcup Q\Pic(\calC)[2]$. Hence, we can express all the quadratic forms in terms of $q_1,\dots , q_9$ and label them via subsets of \{1,\dots ,9\} which is formed by the indices of the points $P_1,\dots , P_8$ and an extra index 9 as follows.

\begin{center}
\begin{tabular}{c l c c r }
 
even &  & $q_1,\dots , q_9 $&\qquad$\longleftrightarrow$ & \qquad $\{i\}$ \\
odd & & $q_i+q_j+q_k$&\qquad$\longleftrightarrow$ & \qquad$\{i,j,k\}$  \\ 
even &  & $q_{i_1}+\dots+q_{i_5}$&\qquad$\longleftrightarrow$  &\qquad$\{i_1,\dots , i_5\}$ \\ 
odd & & $q_{i_1}+\dots+q_{i_7} $&\qquad$\longleftrightarrow$  &\qquad$\{i_1,\dots , i_7\}$ \\
even &  & $q_{1}+\dots+q_{9}$&\qquad$\longleftrightarrow$  &\qquad$\{1,\dots , 9\}$
\end{tabular}
\end{center} 

This labeling determines the parities of quadratic forms depending on only the cardinality of the label by definition of Aronhold basis and distinguishes them from each other, which follows from Remark~\ref{determination}. 

Furthermore, we can specify which quadratic form corresponds to which exceptional divisor on $\calS$ by finding the corresponding theta characteristic divisor. We give the following table for the correspondence and refer \cite[Section 1.2.3]{Tur2018} for the computations.

\begin{table}[H]
 \thisfloatpagestyle{empty}
  \begin{tabular}{l c  r }
   $D_{q_{ijk}}$ \qquad& \qquad$\longleftrightarrow$ \qquad & \qquad (2,4) \\ 
      $D_{q_{ij9}}$ \qquad&  \qquad$\longleftrightarrow$ \qquad & \qquad (3,3) \\ 
         $D_{q_{i_1\dots i_7}}$ \qquad& \qquad$\longleftrightarrow$ \qquad& \qquad (0,6) \\ 
          $D_{q_{i_1\dots i_6 9}}$ \qquad & \qquad $\longleftrightarrow$  \qquad& \qquad (1,5) \\
  \end{tabular}
\end{table}

\begin{example}\label{Ex} Let $k=\CC$. We consider the following 8 points in $\PP^2(k)$. 
 \begin{align*}
    P_1& = (2\!:\!-3\!:\!1), &P_5&=(1/2\!:\!0\!:\!1),&
    P_2& = (3/2\!:\!1\!:\!1), &P_6&=(2/3\!:\!2/3\!:\!1),\\
    P_3& = (0\!:\!-3/2\!:\!1), &P_7&=(-3\!:\!2\!:\!1),&
    P_4& = (3/2\!:\!3/2\!:\!1), &P_8&=(1/3\!:\!1\!:\!1).
  \end{align*}

The defining equations of the curve $\calC$ of genus 4 are the following equations.\\

\begin{tikzpicture}
\node[anchor=west,  scale=0.020cm] (eqn) {\begin{tabular}{l} $x_0^3 + 2425564030663/162140107530x_0^2x_1$ \\
$+ 15669691012720998280286400529/149429031846570347991915600x_0^2x_2$ \\
$+ 211709448479418431107937289647/448287095539711043975746800x_0x_1x_2$\\ 
$+   9910047994802558384716635818134607/7644191553143152721874434433600x_0x_2^2$\\ 
$+ 905783995186184025726770668993123/395389218266025140786608677600x_1x_2^2$\\ 
$+ 92368472409963092742435769596441128153/55863751870370160091458366840748800x_2^3$ \\
    $- 29830449072973532706572819/236073482149263407485060x_0^2x_3$\\ 
    $- 1011526537873326300399754441310551/637015962761929393489536202800x_0x_1x_3$\\
    $ - 36690171700015844095035636292739030279/5431198098508210008891785665072800x_0x_2x_3$\\ 
    $-  331206537690486038353064612628955433/17557752473625678908055341589675x_1x_2x_3 $\\
    $- 1062182808186693286084865785168875289650763/79382391407795997489962339280704044800x_2^2x_3 $\\
    $+ 
    1162135044692003397844695454373200491311/1715051666217814760585606095570766400x_0x_3^2 $\\
    $+ 24403529360323237608442590572012473543277/798386120480706871307092492765701600x_1x_3^2 $\\
    $- 
    2260119208090704069704104349128528879254888149/112802378190478112433236484117880447660800x_2x_3^2$\\ 
    $+ 
    13204897029090296036812705006873263119397966616189/53430726469556465922543014643836038708665600x_13^3,$ \\
$x_1^2 - x_0x_2$ \end{tabular}} ; 
\end{tikzpicture}

The list of the equations of the tritangents and their corresponding labels are computed as follows. \\

\begin{tikzpicture}
\node[xshift= 10cm, scale=0.016cm] (n1) {\begin{tabular}{l}
    $<x_0 + 4073720176917559726/1133376368146185855x_1 + 327453011960962204578454/24157917287035951499325x_2 - 112735162688749958708129581/1373136018595123483221633x_3, \{ 1, 2, 4, 5, 6, 7, 8 \}>,$\\
    $<x_0 + 350578164144253479/46203319572275680x_1 + 18637999053114537307081/2363577016039334686080x_2 + 131316444954786724427554199/3358642939791894588919680x_3, \{ 1, 2, 3, 4, 5, 6, 8 \}>,$\\
    $<x_0 + 211479792266371/218188348090920x_1 + 1316618230054501981871/70690406521280988960x_2 - 5757723440908096949483149/33483689222246761770720x_3, \{ 1, 2, 3, 4, 5, 7, 8 \}>,$\\
    $<x_0 + 17373590988118142/4158176266996215x_1 + 214542061297728791377/17726305426204864545x_2 - 1510026721794434231897344/25189080010637112518445x_3, \{ 1, 2, 3, 5, 6, 7, 8 \}>,$\\
    $<x_0 + 4864890585953831/378366090117870x_1 - 512579666106105329/131671399361018760x_2 + 18527122874371793828509/80187882210860424840x_3, \{ 1, 2, 3, 4, 5, 6, 7 \}>,$\\
    $<x_0 + 149286535172373/30708550083235x_1 + 964354660076388445/89767233603312552x_2 - 7710930391228068766751/212598731583845227320x_3, \{ 1, 3, 4, 5, 6, 7, 8 \}>,$\\
    $<x_0 + 600953315525715/190786792110254x_1 + 279475735717785218413/19519778274384307248x_2 - 4469565218503206087488519/46229341546500167665680x_3, \{ 1, 2, 3, 4, 6, 7, 8 \}>,$\\
    $<x_0 + 24850715371714303/5696394486025210x_1 + 3820203557552276572807/291404756327105642760x_2 - 786876715268942791133399/12548065416388397526120x_3, \{ 2, 3, 4, 5, 6, 7, 8 \}>,$\\
    $<x_0 + 903225497/369556365x_1 + 199224827801747/12603350271960x_2 - 6542142470614222261/53728082209365480x_3, \{ 1, 2, 4, 6, 7, 8, 9 \}>,$\\
    $<x_0 + 1289061985/307910162x_1 + 209545404053987/15751452247272x_2 - 2527971780098556301/37304689405622520x_3, \{ 1, 2, 4, 5, 6, 8, 9 \}>,$\\
    $<x_0 + 3227297803/826483812x_1 + 90490251893963/7046600981112x_2 - 14336323894698807/202287272609296x_3, \{ 1, 2, 5, 6, 7, 8, 9 \}>,$\\
    $<x_0 + 57335427860129/11475325450080x_1 + 679202562690930247/65225749858254720x_2 - 25978847750809076598431/834172114937219614080x_3, \{ 2, 3, 5, 6, 7, 8, 9 \}>,$\\
    $<x_0 + 6322191013177/3814460277285x_1 + 935124327549320243/65044176648263820x_2 - 3983438390860063284061/30809258339060962740x_3, \{ 1, 3, 4, 5, 7, 8, 9 \}>,$\\
    $<x_1 - 12646609161011223/21350501266145380x_2 + 3321248368938556955/130959981147593892x_3, \{ 1, 3, 4, 5, 6, 7, 9 \}>,$\\
    $<x_0 + 5122998713/662005752x_1 + 96913679546351/11288522083104x_2 + 8883742040081694071/240614848201361760x_3, \{ 2, 4, 5, 6, 7, 8, 9 \}>,$\\
    $<x_0 + 1935440839217/237125656815x_1 + 36305614923105853/4043466700009380x_2 + 264987832094981182319/5745766180713328980x_3, \{ 1, 3, 4, 5, 6, 8, 9 \}>,$\\
    $<x_0 + 3663575555203/986415103560x_1 + 226684261333339967/16820350345905120x_2 - 209898118142285233287/2655746426836797280x_3, \{ 1, 2, 3, 4, 7, 8, 9 \}>,$\\
    $<x_0 - 1152767551702/459889427085x_1 + 7025018642918134/280072661094765x_2 - 1161242235470603367/4020032842582435x_3, \{ 2, 3, 4, 5, 7, 8, 9 \}>,$\\
    $<x_0 + 19960603357/5537568230x_1 + 20324578773181561/1537804847743920x_2 - 174334477334169059389/2185220688644110320x_3, \{ 1, 4, 5, 6, 7, 8, 9 \}>,$\\
    $<x_0 + 12997908343344/3072843440995x_1 + 1182489526791012001/89825359467165840x_2 - 8415082833189358503901/127641835802842658640x_3, \{ 1, 3, 5, 6, 7, 8, 9 \}>,$\\
    $<x_0 + 27526310486771/6714783636045x_1 + 31005944979969511/2336744705343660x_2 - 32920978436860027141/474359175184762980x_3, \{ 1, 2, 3, 4, 6, 7, 9 \}>,$\\
    $<x_0 + 601775140562/121781832033x_1 + 76094874659365/5967309769617x_2 - 210266083788915836/4385972680668495x_3, \{ 1, 2, 3, 4, 5, 7, 9 \}>,$\\
    $<x_0 + 12989495751/3553066880x_1 + 1343532205225249/98670088484352x_2 - 4778774631566269/58847559697920x_3, \{ 1, 2, 4, 5, 6, 7, 9 \}>,$\\
    $<x_0 + 236687709629/45883182720x_1 + 5324639502755791/434666684300800x_2 - 8883742040081694071/222358089020917248x_3, \{ 1, 2, 3, 4, 6, 8, 9 \}>,$\\
    $<x_0 + 240885426739/27411960735x_1 + 332713529569297/80591164560900x_2 + 4778774631566269/52652894179788x_3, \{ 2, 3, 4, 6, 7, 8, 9 \}>,$\\
    $<x_0 + 1496363111081/538417805370x_1 + 276048316628067587/18362200834338480x_2 - 2853783855704112955151/26092687385594980080x_3, \{ 2, 3, 4, 5, 6, 7, 9 \}>,$\\
    $<x_0 + 2108609486/598552095x_1 + 11620934086627/850542526995x_2 - 866406732068672/10271565984645x_3, \{ 2, 3, 4, 5, 6, 8, 9 \}>,$\\
    $<x_0 + 520021597/85223190x_1 + 365561073070657/32801383152720x_2 - 578231730419518549/46610765460015120x_3, \{ 1, 2, 3, 4, 5, 8, 9 \}>,$\\
    $<x_0 + 268252789634/41705786505x_1 + 1852484436921866/177791767870815x_2 - 14336323894698807/28071344682714235x_3, \{ 1, 2, 3, 4, 5, 6, 9 \}>,$\\
    $<x_0 + 50327099887/14925294618x_1 + 3553120080354109/254506123826136x_2 - 97195497231426345191/1084959605870817768x_3, \{ 1, 2, 4, 5, 7, 8, 9 \}>,$\\
    $<x_0 + 1352622552633/107696545910x_1 + 22683321831250721/5509324502571960x_2 + 450394730250489286981/2609583372718251720x_3, \{ 1, 2, 3, 5, 6, 8, 9 \}>,$\\
    $<x_0 + 650910009649/206892503130x_1 + 5375731423734173/391992329263640x_2 - 1396639895046926211671/15039569696858075880x_3, \{ 1, 2, 3, 5, 7, 8, 9 \}>,$\\
    $<x_0 + 89148743916331/20793229551720x_1 + 4207062252255625279/354566150315929440x_2 - 9406582993746727996021/167946166532978578080x_3, \{ 3, 4, 5, 6, 7, 8, 9 \}>,$\\
    $<x_0 + 3714576415226/1043070075645x_1 + 692159506850267/51629697909720x_2 - 12433267694395540129861/151647110108314952040x_3, \{ 1, 2, 3, 6, 7, 8, 9 \}>,$\\
    $<x_0 + 10524782449/6140033768x_1 + 173639060741794993/11801169462231072x_2 - 4778774631566269/36733616885280x_3, \{ 1, 2, 3, 5, 6, 7, 9 \}>,$\\
    $<x_0 + 76000541890202/23809171477365x_1 + 196599415703466254/14499785429715285x_2 - 624542835374027261879/6868065031875139995x_3, \{ 1, 3, 4, 6, 7, 8, 9 \}>,$\\
    $<x_0 + 7308689401/3659160240x_1 + 148488847558127/8913714344640x_2 - 578231730419518549/4222129361244480x_3, \{ 6, 7, 8 \}>,$\\
    $<x_0 + 7838339/851180x_1 + 3149047901/410782680x_2 + 4778774631566269/61874551957680x_3, \{ 1, 2, 8 \}>,$\\
    $<x_0 + 46606283/19508288x_1 + 14017818625949/997965980928x_2 - 52566520947228959/472703219632896x_3, \{ 2, 5, 6 \}>,$\\
    $<x_0 + 909136639/136867830x_1 + 57232887380099/4667740474320x_2 - 4778774631566269/602987201273520x_3, \{ 1, 5, 7 \}>,$\\
    $<x_0 + 8676718411/2339134650x_1 + 5212099125213067/398869240518000x_2 - 578231730419518549/7557242543681040x_3, \{ 4, 6, 7 \}>,$ \\
     $<x_0 + 4031771577/1564767490x_1 + 158517081960157/11435320816920x_2 - 1725137641995423109/16249590880843320x_3, \{ 3, 4, 7 \}>,$\\
    $<x_0 + 389217154/47969625x_1 + 167025712745806/19426978580625x_2 + 4778774631566269/100384496592975x_3, \{ 1, 4, 6 \}>,$\\
    $<x_0 + 26010483/4488410x_1 + 5324595000427/459218203920x_2 - 4778774631566269/217516355923440x_3, \{ 2, 3, 4 \}>,$\\
    $<x_0 + 460241374/92711655x_1 + 1765611173520406/142677952480665x_2 - 15191724553749169151/337908950791708275x_3, \{ 3, 6, 8 \}>,$\\
    $<x_0 + 20785293/2673025x_1 + 2494820286557/273482533800x_2 + 4778774631566269/129539560176600x_3, \{ 3, 4, 5 \}>,$\\
    $<x_0 + 1294221563/326271135x_1 + 24858158126371/1854525131340x_2 - 578231730419518549/7905840634902420x_3, \{ 2, 4, 6 \}>,$\\
    $<x_0 + 118176714/25240405x_1 + 28836334859897/2582396316360x_2 - 52566520947228959/1223195055182520x_3, \{ 2, 7, 8 \}>,$\\
    $<x_0 - 6045004297/111280560x_1 + 2549009786513/65432969280x_2 - 4778774631566269/3206215494720x_3, \{ 1, 2, 3 \}>,$\\
    $<x_0 + 80500991/19668981x_1 + 5337932826827/399280314300x_2 - 4778774631566269/68085279194436x_3, \{ 1, 3, 5 \}>,$\\
    $<x_0 + 14315647526/3533406315x_1 + 186465983111326/15062911120845x_2 - 1381065868522651741/21404396702720745x_3, \{ 4, 7, 8 \}>,$ \\
           $<x_0 + 29240926201/8544126720x_1 + 507812636527649/38038452157440x_2 - 52566520947228959/621294718571520x_3, \{ 4, 5, 7 \}>,$\\
    $<x_0 + 515020940407/108383951640x_1 + 252792229397791/19703231805600x_2 - 1314163023680723975/25011807873543456x_3, \{ 3, 5, 7 \}>,$ \\
    $<x_0 + 2801718062194/700321840155x_1 + 635693192925161299/47767552073292240x_2 - 1624252930748427604141/22625897165382757680x_3, \{ 3, 6, 7 \}>,$\\

    $<x_0 + 890089557298/179791690485x_1 + 9971686829623163/766451976537555x_2 - 4893465222723859456/99011659878169605x_3, \{ 1, 3, 6 \}>,$\\
    $<x_0 + 6987394176/2124051725x_1 + 6122731721078153/434631960176400x_2 - 18976514061949654199/205870671803554800x_3, \{ 5, 7, 8 \}>,$ \\
        $<x_0 + 6044782829/771271170x_1 + 240792101418047/26303431981680x_2 + 157699562841686877/4153019649551920x_3, \{ 1, 4, 8 \}>,$\\

    $<x_0 + 290811137/159178200x_1 + 29208827521823/1938790476000x_2 - 4778774631566269/36733616885280x_3, \{ 2, 3, 8 \}>,$\\
    $<x_0 + 3794287364/624085305x_1 + 543742094766601/42567610483440x_2 - 157699562841686877/6720952721885360x_3, \{ 1, 3, 8 \}>,$\\
    $<x_0 + 252777853/70107910x_1 + 97913108091947/7172880487920x_2 - 52566520947228959/637041448333395x_3, \{ 5, 6, 7 \}>,$\\
    $<x_0 + 1044954439/123675750x_1 + 3763015334623/421783777800x_2 + 4778774631566269/90811325493000x_3, \{ 1, 2, 5 \}>,$\\
    $<x_0 + 22136253442/3020106585x_1 + 174544232379842/12874714371855x_2 - 4778774631566269/6098323040801985x_3, \{ 1, 3, 7 \}>,$\\
    $<x_0 + 37586733/11415598x_1 + 1130973706147/83425190184x_2 - 52566520947228959/592735976257320x_3, \{ 2, 5, 8 \}>,$\\
    $<x_0 + 146925169/22158645x_1 + 8478597050327/755698429080x_2 - 4778774631566269/3221542403168040x_3, \{ 1, 5, 8 \}>,$\\
    $<x_1 - 73383553009/92215766580x_2 + 52566520947228959/1965579064652700x_3, \{ 3, 4, 8 \}>,$\\
    $<x_0 + 11923160198/886123995x_1 + 27276319349773/3777546590685x_2 + 305841576420241216/1789297901787795x_3, \{ 1, 4, 7 \}>,$\\
    $<x_0 + 314590005619/970111005x_1 - 1663113769574003/5514110952420x_2 + 208741654681446196189/23506654990166460x_3, \{ 2, 3, 6 \}>,$\\
    $<x_0 + 31952261/7825302x_1 + 158700294619/12708290448x_2 - 52566520947228959/812631632697360x_3, \{ 2, 6, 7 \}>,$\\
    $<x_0 + 219292391/121034120x_1 + 12682688588981/884517348960x_2 - 52566520947228959/418966384290720x_3, \{ 1, 3, 4 \}>,$\\
    $<x_0 + 8751112709/1344617670x_1 + 36568751907253/3275488644120x_2 - 4778774631566269/1269400735443960x_3, \{ 1, 6, 8 \}>,$\\
    $<x_0 + 3966334334/1192648455x_1 + 68360391333989/5084260363665x_2 - 210266083788915836/2408244658922655x_3, \{ 4, 6, 8 \}>,$\\
    $<x_0 + 117621798/19513285x_1 + 869684231805451/75864842166960x_2 - 52566520947228959/3266786082401520x_3, \{ 1, 5, 6 \}>,$\\
    $<x_0 + 860258317/134539608x_1 + 23549376325625/2294169395616x_2 - 4778774631566269/16300073555851680x_3, \{ 1, 2, 4 \}>,$\\
    $<x_0 + 96334320299/15112190400x_1 + 2678681168397901/257693070700800x_2 - 578231730419518549/366181853465836800x_3, \{ 3, 5, 6 \}>,$\\
      $<x_0 - 1752631/1813911x_1 + 340452553667/20620540248x_2 - 4778774631566269/23974189930152x_3, \{ 1, 2, 7 \}>,$\\
    $<x_0 + 5014787462/590751915x_1 + 1574752666741/193721185665x_2 + 210266083788915836/3578611462789545x_3, \{ 2, 3, 5 \}>,$\\
      
    $<x_0 + 185687023/51000960x_1 + 1685604014111/124238338560x_2 - 4778774631566269/58847559697920x_3, \{ 2, 4, 8 \}>,$\\
    $<x_0 + 182494387/57564237x_1 + 5688366094843/396811106748x_2 - 1419296065575181893/14723235214654820x_3, \{ 2, 4, 7 \}>,$\\
    $<x_0 + 3860992291/221122095x_1 - 25760867/161321780x_2 + 4778774631566269/15620949279180x_3, \{ 1, 4, 5 \}>,$\\
    $<x_0 + 737566882/156964377x_1 + 8749351307498/669139139151x_2 - 52566520947228959/950846716733571x_3, \{ 2, 4, 5 \}>,$\\
    $<x_0 + 97152207638/26538712935x_1 + 1528183807691977/113134533241905x_2 - 12921806603755191376/160764171736747005x_3, \{ 3, 7, 8 \}>,$\\
        $<x_0 + 869455681/96464070x_1 + 13998956072243/1644905321640x_2 + 52566520947228959/779136820683480x_3, \{ 3, 4, 6 \}>,$\\
    $<x_0 + 3439954626/440890307x_1 + 4489039676461/495696363624x_2 + 52566520947228959/1387424079579720x_3, \{ 1, 2, 6 \}>,$\\
    
       $<x_0 + 19942511/5396760x_1 + 4607902701299/349697095776x_2 - 578231730419518549/7453793596465440x_3, \{ 2, 5, 7 \}>,$\\
    $<x_0 - 459505759/75620760x_1 + 22567415192149/1289485199520x_2 - 578231730419518549/1832358468517920x_3, \{ 4, 5, 6 \}>,$\\
    $<x_0 + 1333682313/239656120x_1 + 153968572493101/12259848474720x_2 - 52566520947228959/1583749516597920x_3, \{ 1, 6, 7 \}>,$\\
    $<x_0 + 542447699/149430855x_1 + 67811165515417/5096189878920x_2 - 578231730419518549/7241685817945320x_3, \{ 2, 6, 8 \}>,$\\

    $<x_0 + 5759307737/1413139038x_1 + 642621576031363/48193693751952x_2 - 14455793260487963725/205449716464571376x_3, \{ 5, 6, 8 \}>,$\\
    $<x_0 + 17374741849/2923824405x_1 + 559830400133891/49857053754060x_2 - 129026915052289263/7871874820502140x_3, \{ 3, 5, 8 \}>,$ \\

    $<x_0 + 15558397/3409786x_1 + 2272081472671/174431012616x_2 - 4778774631566269/82622156309112x_3, \{ 1, 7, 8 \}>,$\\
    $<x_0 + 649183624679/193587931455x_1 + 44772153535010761/3301061407170660x_2 - 45575173661247507453/521200917732167540x_3, \{ 2, 3, 7 \}>,$\\
    $<x_0 + 2825601271/1201758249x_1 + 283409432125067/20492381661948x_2 - 358408097367470175/3235519371292012x_3, \{ 4, 5, 8 \}>,$\\

    $<x_0 + 452954149/55364130x_1 + 2453316756031/269734041360x_2 + 52566520947228959/1149876218317680x_3, \{ 4, 5, 9 \}>,$\\
    $<x_0 + 4153381142/1156780515x_1 + 22360431417194/1643785111815x_2 - 52566520947228959/637041448333395x_3, \{ 1, 8, 9 \}>,$\\
    $<x_0 + 101706993/27299960x_1 + 18966920067317/1396556753760x_2 - 52566520947228959/661502382364320x_3, \{ 3, 7, 9 \}>,$\\
    $<x_0 + 8357779/13676000x_1 + 832012900721/53816112000x_2 - 52566520947228959/331381678992000x_3, \{ 4, 7, 9 \}>,$\\
    $<x_0 + 15315551/3941730x_1 + 1805128839643/134428759920x_2 - 4778774631566269/63674422615440x_3, \{ 1, 6, 9 \}>,$\\
    $<x_0 + 157182551/19803111x_1 + 8357709171157/916567189524x_2 + 52566520947228959/1302441976313604x_3, \{ 2, 8, 9 \}>,$\\
    $<x_0 + 934130464082/28195297665x_1 - 722424939794/17170936277985x_2 + 1734695191258555647/2711100050112965x_3, \{ 3, 4, 9 \}>,$

             \end{tabular}};
        \end{tikzpicture}

                \begin{tikzpicture}
\node[anchor =center, scale=0.016cm] (n1) {\begin{tabular}{l}

    $<x_0 + 321930791/40882824x_1 + 2077424745499/232377971616x_2 + 119469365789156725/2971881878997024x_3, \{ 4, 6, 9 \}>,$\\
    $<x_0 + 2348995899/232715990x_1 + 79878409266713/11904819184440x_2 + 578231730419518549/5638916020363080x_3, \{ 4, 8, 9 \}>,$\\
    $<x_0 + 265051469/76851555x_1 + 17632972192729/1310472715860x_2 - 4778774631566269/56429749370820x_3, \{ 1, 2, 9 \}>,$\\
    $<x_0 + 354279823/65360760x_1 + 4487677961381/371510559840x_2 - 52566520947228959/1583749516597920x_3, \{ 5, 8, 9 \}>,$\\
    $<x_0 + 39094004/6221265x_1 + 4693061653729/424340043120x_2 - 4778774631566269/602987201273520x_3, \{ 6, 8, 9 \}>,$\\
    $<x_0 + 1544118371/363157215x_1 + 81790912745929/6192556830180x_2 - 578231730419518549/8799623255685780x_3, \{ 5, 7, 9 \}>,$\\
    $<x_0 + 114067582/37613829x_1 + 133555799743571/9162728744400x_2 - 4778774631566269/47346318348336x_3, \{ 1, 7, 9 \}>,$\\
    $<x_0 + 2681227603/214685310x_1 + 34879744787329/7321627812240x_2 + 578231730419518549/3468011040397680x_3, \{ 2, 5, 9 \}>,$\\
    $<x_0 + 1268226547/202228590x_1 + 84656286129721/6896803833360x_2 - 52566520947228959/3266786082401520x_3, \{ 7, 8, 9 \}>,$\\
    $<x_0 + 4122263849/653914755x_1 + 127540146188201/11150554402260x_2 - 52566520947228959/5281645935203820x_3, \{ 5, 6, 9 \}>,$\\
    $<x_0 + 212507/35389x_1 + 9659143653535/952249193784x_2 - 52566520947228959/6765730521835320x_3, \{ 2, 4, 9 \}>,$\\
    $<x_0 + 1444180727/74832840x_1 + 1615306935449/425349862560x_2 + 578231730419518549/1813266464093280x_3, \{ 2, 7, 9 \}>,$\\
    $<x_0 + 1662916511/296702670x_1 + 59231178794063/5059373928840x_2 - 52566520947228959/1960737368967720x_3, \{ 1, 3, 9 \}>,$\\
    $<x_0 + 1630742579/374235850x_1 + 253777536879103/19144409142600x_2 - 578231730419518549/9068068463878200x_3, \{ 3, 8, 9 \}>,$\\
    $<x_0 + 8387431/1582320x_1 + 349856118799/26981720640x_2 - 4778774631566269/115023075088320x_3, \{ 3, 6, 9 \}>,$\\
    $<x_0 + 121548566/18598245x_1 + 772339040311/60407099760x_2 - 52566520947228959/3862731994153200x_3, \{ 3, 5, 9 \}>,$\\
    $<x_0 + 148638/133405x_1 + 761476658647/51865942968x_2 - 52566520947228959/368507524787640x_3, \{ 2, 3, 9 \}>,$\\
    $<x_0 + 243818093/30689424x_1 + 4622877758687/523316058048x_2 + 52566520947228959/1239386864143680x_3, \{ 2, 6, 9 \}>,$\\
    $<x_0 + 237245453/81669390x_1 + 2104936287263/154736270920x_2 - 52566520947228959/539706046035240x_3, \{ 1, 4, 9 \}>,$\\
    $<x_0 + 1090343986/209317425x_1 + 3760712011622/297440060925x_2 - 52566520947228959/1267986979723275x_3, \{ 6, 7, 9 \}>,$\\
    $<x_0 + 263078731/58111680x_1 + 221037218393533/16845646245120x_2 - 14336323894698807/241794578932480x_3, \{ 1, 5, 9 \}>.$

       \end{tabular}};
              \end{tikzpicture}

\end{example}

\begin{remark}
Assume that $\calC$ lies on a smooth quadric. Once we have the equation of the curve we can compute the equations of tritangents \cite[Algorithm 3.1]{CelKulRenSay2018}. 

However, since we know how to obtain the complete level structure for the curves lying on a quadric cone, we could deform numerically a complete 2-level structure of an initial curve lying on a quadric cone to a target curve, namely the curve we want to have a complete 2-level structure, lying on smooth quadric by using the mathematical software Bertini \cite{BHSW06}. For a related work, see \cite[Section 4]{HauKulSerShe}.  
\end{remark}

\subsection{An Explicit Computation}\label{Test}

In this section, we verify Algorithm~\ref{AlgTheCon} on Example~\ref{Ex}. On one hand, we compute $\left(\frac{\vartheta[p_1]}{\vartheta[p_2]} \right)^4$ for $p_1=\{1,2,3,4,5\}$ and $p_2=\{3,4,5,6,9\}$ with the algorithm. The value is  

  \begin{align*}
  \left(\frac{\vartheta[p_1]}{\vartheta[p_2]}\right)^4&=388285435266921829/1618395584522100000 \\ 
                                                                               &\approx{\color{red}{0.239919979379812499393102579095}}31044756875610414688.
  \end{align*}

On the other hand,  we compute the following Riemann matrix of $\calC$ in Maple~\cite{Maple}

$$\begin{bmatrix}
1.07847i & -0.19708i & 0.30983 & 0.50267i \\
-0.19708i & 1.16996i  &  0.05607 & 0.24922i \\
0.30983 & 0.05607 & 1.23052i & -0.16325 \\
0.50267i & 0.24922i & -0.16325 & 1.42766i 
\end{bmatrix}.$$
Now, we can compute all the theta constants numerically as the values of theta functions at zeros. Hence, by looking through all the fourth powers of the quotients of them, we can determine that one of such values is approximately 
 \begin{align*}
&{\color{red}{0.239919979379812499393102579095}}95601233140655714802 + \\
&3.715929853910080160263726032046764685890634691202\times 10^{-53}i,  \end{align*}
which is strongly approximate to what we computed with the algorithm.

\bibliographystyle{plain}
\markboth{Bibliography}{}

\end{document}